\numberwithin{equation}{section}
\newenvironment{thm-ref}[1]
  {\innercustomthm}
  {\endinnercustomthm}
\newenvironment{cor-ref}[1]
  {\innercustomcor}
  {\endinnercustomcor}
\newtheorem{theorem}{Theorem}[section]
\newtheorem{lemma}[theorem]{Lemma}
\newtheorem{prop}[theorem]{Proposition}
\newtheorem{cor}[theorem]{Corollary}
\newtheorem*{Thm}{Theorem}
\newtheorem*{Lem}{Lemma}
\newtheorem*{main-thm}{Main Theorem}
\newtheorem*{main-lemma}{Main Lemma}
\theoremstyle{definition}
\newtheorem{defn}[theorem]{Definition}
\newtheorem{remark}[theorem]{Remark}
\newcommand{\abs}[1]{\left\vert#1\right\vert} 
\newcommand{\norm}[1]{\left\|#1\right\|} 
\newcommand{\pare}[1]{\left(#1\right)} 
\newcommand{\braces}[1]{\left\{#1\right\}} 
\newcommand{\brackets}[1]{\left[#1\right]} 
\newcommand{\set}[2]{\left\{#1 \; :\; #2\right\}} 
\DeclareMathOperator*{\diver}{div} 
\DeclareMathOperator*{\diam}{diam}
\DeclareMathOperator*{\dist}{dist}
\newcommand{\R}{\mathbb R} 
\newcommand{\N}{\mathbb N} 
\newcommand{\M}{\mathcal{M}} 
\renewcommand{\S}{\mathcal{S}}
\newcommand{\T}{\mathcal{T}}
\newcommand{\X}{\mathbb{X}}
\newcommand{\W}{\mathcal{W}}
\def\Xint#1{\mathchoice
   {\XXint\displaystyle\textstyle{#1}}%
   {\XXint\textstyle\scriptstyle{#1}}%
   {\XXint\scriptstyle\scriptscriptstyle{#1}}%
   {\XXint\scriptscriptstyle\scriptscriptstyle{#1}}%
   \!\int}
\def\XXint#1#2#3{{\setbox0=\hbox{$#1{#2#3}{\int}$}
     \vcenter{\hbox{$#2#3$}}\kern-.5\wd0}}
\def\dashint{\Xint-}
\begin{document}
\title{A priori H\"{o}lder and Lipschitz regularity for generalized $p$-harmonious functions in metric measure spaces}
\author{\'Angel Arroyo and Jos\'e G. Llorente}
\date{\small{ Departament de Matem\`{a}tiques \\ Universitat Aut\`onoma de Barcelona \\ 08193 Bellaterra. Barcelona \\SPAIN \\
arroyo@mat.uab.cat \\jgllorente@mat.uab.cat }\\ \ \\ \small{\today} }

\maketitle

\let\thefootnote\relax\footnotetext{\hspace{-7pt}\emph{Keywords:} mean value property, $p$-harmonious, $p$-laplacian, metric measure spaces.

MSC2010: 31C05, 31C45, 35B05, 35B65.

Partially supported by grants MTM2011-24606, MTM2014-51824-p and 2014 SGR 75.}

\begin{abstract}
Let $(\X , d, \mu )$ be a proper metric measure space and let $\Omega \subset \X$ be a bounded domain. For each $x\in \Omega$, we choose a radius $0< \varrho (x) \leq \dist(x, \partial \Omega ) $ and let $B_x$ be the closed ball centered at $x$ with radius $\varrho (x)$. If $\alpha \in \R$, consider the following operator in $C( \overline{\Omega} )$, 
$$
				\T_{\alpha}u(x)=\frac{\alpha}{2}\pare{\sup_{B_x } u+\inf_{B_x } u}+(1-\alpha)\,\dashint_{B_x}\hspace{-0.1cm} u\ d\mu.
$$
Under appropriate assumptions on $\alpha$, $\X$, $\mu$ and the radius function $\varrho$ we show  that solutions   $u\in C( \overline{\Omega} )$ of the functional equation $\T_{\alpha}u = u$ satisfy  a local H\"{o}lder or Lipschitz condition in $\Omega$. The motivation comes from the so called $p$-harmonious functions in euclidean domains.
\end{abstract}


\section{Introduction}\label{SEC:INTRO}

The main goal of this paper is to provide a priori regularity estimates for functions satisfying certain nonlinear mean value properties  in metric measure spaces. Our main motivation are classical harmonic functions and the so called $p$-harmonious functions in $\R^n$. First of all, let us recall some basic facts about harmonic functions in euclidean space and their connections to the mean value property. 

It is well known that a continuous function $u$ in a domain $\Omega \subset \R^n$ is harmonic if and only if it satisfies the \emph{mean value property}
\begin{equation}\label {basic mvp}
u(x) = \dashint_{B(x,\varrho )} \hspace{-0.3cm} u \, dm
\end{equation}
for each $x\in \Omega$ and each $\varrho >0$ such that $ 0 < \varrho < \dist (x, \partial \Omega )$, where $m$ denotes $n$-dimensional Lebesgue measure. The mean value property plays a relevant role in Geometric Function Theory and is indeed the fundamental key of the interplay between harmonic functions,  Probability and Brownian motion. 
\\

The so called \emph{restricted} mean value property problems ask how many radii $\varrho$ in \eqref{basic mvp} are enough to guarantee harmonicity. One of the most representative results in this direction is  a classical theorem due to Volterra (for regular domains) and Kellogg's (in the general case): if $\Omega$ is bounded, $u\in C(\overline{\Omega})$ and if for each $x\in \Omega$ there is a radius $\varrho = \varrho (x)$, with $0< \varrho \leq \dist (x, \partial \Omega )$, such that \eqref{basic mvp} holds, then $u$ is harmonic in $\Omega$ (see \cite{V}, \cite{K}). Therefore, under appropriate hypothesis, the mean value property for a single radius (depending on the point) implies harmonicity. See \cite{N-V} for a detailed account of this and other results related to the mean value property.  
\\

The question of what are the natural stochastic processes associated to some nonlinear differential operators, like the $p$-laplacian or the $\infty$-laplacian, has attracted an increasing attention in the last years. If $1<p<\infty$ the \emph{$p$-laplacian} is the divergence form differential operator given by
$$
				\triangle_p u = \diver( \nabla u |\nabla u |^{p-2} )
$$
and weak solutions of the equation $\triangle_p u = 0$ are called  \emph{$p$-harmonic} functions. 
Suppose that $u\in C^2$  and that $\nabla u \neq 0$. Then direct computation gives 
\begin{equation} \label{p-lapla}
				\triangle_p u = |\nabla u |^{p-2} \pare{ \triangle u + (p-2)\frac{\triangle_{\infty}u}{|\nabla u |^2} }
\end{equation}
where 
$$
\triangle_{\infty}u = \sum_{i,j=1}^n u_{x_i}u_{x_j}u_{x_i , x_j}
$$
is the so called \emph{$\infty$-laplacian} in $\R^n$. So, at least in the smooth case and away from the critical points, the $p$-laplacian can be understood as a sort of linear combination of the usual laplacian and the normalized $\infty$-laplacian. Observe that we recover the usual laplacian when $p=2$. 
\\

Let us briefly explain now the connection between the $p$-laplacian and the mean value property. First, we recall that if $u\in C^2 (\Omega )$, where $\Omega \subset \R^n$, then the following asymptotic mean value property holds for any $x\in \Omega$:
\begin{equation} \label {Taylor}
				\lim_{\varrho \to 0} \frac{1}{\varrho^2} \pare{ \dashint_{B(x,\varrho )} \hspace{-0.3cm}u\, dm - u(x) } = \frac{\triangle u (x)}{2(n +2)}
\end{equation}
On the other hand, if $\nabla u(x) \neq 0$ then the following mid-range asymptotic mean value property also holds (see \cite{MAN-PAR-ROS-10}, \cite{LLO}): 

\begin{equation} \label {infty mvp}
				\lim_{\varrho \to 0} \frac{1}{\varrho^2} \brackets{\frac{1}{2}\pare{ \sup_{B(x,\varrho )}u + \inf_{B(x,\varrho)}u } - u(x) } = \frac{\triangle_{\infty}u(x)}{2|\nabla u (x) |^2}
\end{equation}
Therefore, taking
\begin{equation} \label{alpha}
\alpha = \frac{p-2}{p+n}
\end{equation}
it follows from \eqref{Taylor} and \eqref{infty mvp} that if $u\in C^2(\Omega )$ is $p$-harmonic in $\Omega$ then $u$ satisfies the \emph{asymptotic $p$-mean value property} 
\begin{equation} \label {p-mvp}
\lim_{\varrho \to 0}\frac{1}{\varrho^2}  \brackets{ \frac{\alpha}{2}\pare{ \sup_{B(x,\varrho )}u
+ \inf_{B(x,\varrho )}u }  + (1 - \alpha ) \dashint_{B(x,\varrho )} \hspace{-0.2cm}u
\, dm - u(x) } = 0
\end{equation}
at those $x$'s such that $\nabla u (x) \neq 0$. When $p \neq 2$ and $n\geq 3$ it is an open question whether  $p$-harmonic functions satisfy the asymptotic $p$-mean value property  at any point, one of the obstacles being that $p$-harmonic functions are only $C^{1, \beta}$  for some $0< \beta < 1$ (\cite{URA}, \cite{LEW}),  but not $C^2$ in general. More information has been recently obtained when $n=2$: it turns out that planar $p$-harmonic functions always satisfy the asymptotic $p$-mean value property at any point. (In \cite{LIN-MAN} the result was proven for a certain interval of $p$'s and in \cite{ARR-LLO-16-2} for the whole range $1 < p < \infty$).    
\\

Let $\Omega \subset \R^n$ be a bounded domain. Suppose that for each $x\in \Omega$, a radius $\varrho = \varrho (x)>0$ is given so that  $0 < \varrho \leq \dist(x, \partial \Omega )$ and let $B_x = \overline{B}(x,\varrho )$ be the closed ball centered at $x$ of radius $\varrho$. Inspection of formula (\ref{p-mvp}) suggests the definition of the following operators in $C(\overline{\Omega} )$:      
\begin{flalign*}
				\M u(x)&= \dashint_{B_x} u\ dm, \\
				\S u(x)&= \frac{1}{2}\, \pare{\sup_{B_x}u +   \inf_{B_x}u},  \\
				\T_\alpha u(x) &= \alpha\,\S u(x)+(1-\alpha)\M u(x).             
\end{flalign*}
The operators $\T_{\alpha}$ have recently come out in different contexts. When $\alpha = 1$ and a radius function in $\Omega$ is given, functions $u$ satisfying $\S u =u$ have been called \emph{harmonious functions} in the literature. The connection between harmonious functions and  extension problems was studied in \cite{LEG-ARC}, in the more general context of metric spaces. Existence and uniqueness of the Dirichlet problem for harmonious functions was also discussed there. The influential papers \cite{PER-SCH-SHE-WIL} and \cite{PER-SHE} opened the path to an stochastic interpretation of the $p$-laplacian and the $\infty$-laplacian, via the Dynamic Programming Principle (corresponding essentially to the functional equation $T_{\alpha} u = u $) for certain tug-of-war games. See also \cite{MAN-PAR-ROS-10} and \cite{MAN-PAR-ROS-12}, where the game-stochastic approach was continued and developed, in the case $0 \leq \alpha < 1$, or $p\geq 2$.  
\\

If $p \geq 2$, $\alpha$ is as in (\ref{alpha}) and  $r(x) = \varepsilon$ is constant then (not necessarily continuous) functions $u$ satisfying $\T_{\alpha}u = u $ were called \emph{$p$-harmonious functions} in \cite{MAN-PAR-ROS-12}. Note that the range $0\leq \alpha \leq 1$ corresponds to the range $2 \leq p \leq +\infty$.  In order to pose the Dirichlet problem for such $p$-harmonious functions,  the authors in \cite{MAN-PAR-ROS-12} needed to extend a given $f\in C(\partial \Omega ) $  to the strip $\displaystyle \{ x\in \R^n \setminus \Omega \, : \dist(x, \partial \Omega ) \leq \varepsilon \} $ and proved that, if $\Omega \subset \R^n$ is bounded and satisfies some regularity assumptions  then there is a unique $p$-harmonious function $u_{\varepsilon}$ having $f$ as boundary values (in the extended sense). Furthermore, $\{ u_{\varepsilon} \} \to u$ uniformly in $\overline{\Omega}$ as $\varepsilon \to 0$, where $u$ is the unique $p$-harmonic function solving the Dirichlet problem in $\Omega$ with boundary data $f$. See also \cite{LUI-PAR-SAK-1} for an analytic approach, still in the constant radius case. 
\\

Continuous functions $u$ satisfying  $\T_{\alpha}u = u$ in the variable radius case were considered in \cite{ARR-LLO-16-1} and the existence and uniqueness of the Dirichlet problem for such a class of functions was established there under certain assumptions on the domain, the parameter $\alpha$ and the radius function. 
\\

Our main concern in this paper is to provide  H\"{o}lder and Lipschitz regularity estimates for continuous solutions of the functional equation $\T_{\alpha}u = u$ in metric measure spaces, depending on the regularity of the radius function $\varrho$ (see \Cref{main} below). In the constant radius case, the local Lipschitz regularity of $p$-harmonious functions for $p \geq 2$ was obtained in \cite{LUI-PAR-SAK-2}. As for the case $\alpha =1$ (or $p = \infty$),  not much is known. Unfortunately, our methods cannot be extended to cover the case $\alpha =1$. 


\section{Preliminary definitions and main results}

\subsection{Metric measure spaces and admissible radius functions}

Let $(\X , d)$ be a metric space. We say that $(\X , d)$ is \emph{proper} if every closed and bounded subset of $\X$ is compact. $(\X , d)$ is a \emph{geodesic space} if for any two points $x$, $y\in \X$ there is a curve $\gamma$ connecting $x$ and $y$ whose length is equal to $d(x,y)$. 
\\

A \emph{metric measure space} $(\X , d, \mu )$ is a metric space endowed with a Borel positive regular measure $\mu$. In what follows, we will only consider measures $\mu$ such that  $0 < \mu (B) < \infty$ for every ball $B \subset \X$. 
\\

\begin{defn}
Let $(\X,d,\mu)$ be a metric measure space. We say that $\mu$ is \emph{doubling} (equivalently, $(\X,d,\mu)$ is a \emph{doubling metric measure space}) if there exists a constant $D_\mu\geq1$ such that
\begin{equation}\label{doubling}
				\mu\pare{B(x,2r)} \leq D_\mu\, \mu(B(x,r))
\end{equation}
for any $x\in\X$ and each $r>0$.
\end{defn}

The following property will play a central role in what follows.  

\begin{defn}
Let $\delta\in(0,1]$. A metric measure space $(\X,d,\mu)$ satisfies the \emph{$\delta$-annular decay property} if there exists a constant $D_\delta\geq 1$ such that
\begin{equation}\label{delta-AD}
				\mu\pare{B(x,R)\setminus B(x,r)} \leq D_\delta\pare{\frac{R-r}{R}}^\delta \mu(B(x,R)),
\end{equation}
for each $x\in\X$ and $0<r\leq R$. For $\delta=1$, this property is also known as the \emph{strong annular decay property}.
\end{defn}

We will also use the following definition when studying the continuity properties of the operator $\M$. 

\begin{defn}\label{DEF-ring-cont}
We say that a (Borel, regular) measure $\mu$ in a metric space $\X$ is \emph{ring-continuous} if, for each $x\in \X$ the function 
\begin{equation*}
r\longmapsto\mu(B(x,r))
\end{equation*}
is continuous in $(0, +\infty )$. 
\end{defn}

As a canonical example, $\R^n$ endowed with the euclidean distance and Lebesgue $n$-dimensional measure satisfies the strong annular decay property. The $\delta$-annular decay property was introduced in manifolds by Colding and Minicozzi (\cite{C-M}) and, independently, in metric spaces by Buckley (\cite{BUC}). It is easy to check that the $\delta$-annular decay property implies the doubling property. Conversely, in \cite{BUC} it is proved in particular that a geodesic metric space $( \X, d, \mu )$ with a doubling measure $\mu$ satisfies a $\delta$-annular decay condition for some $\delta \in (0,1]$, where $\delta$ only depends on the doubling constant. In the context of harmonicity in metric measure spaces, the $\delta$-annular decay property has already been used in \cite{ADA-GAC-GOR}. See also \cite{B-B-L} for a local version.

\begin{remark}
Let $(\X,d,\mu)$ be a metric measure space. The following implications hold:
\begin{equation*}
\begin{array}{ccc}
				\delta\mbox{-Annular Decay}  & \Longrightarrow & \mbox{Ring-continuous} \\
				\Downarrow & & \\
				\mbox{Doubling Property} & &
\end{array}
\end{equation*}
In addition, by \cite{DAN-GAR-NHI} and \cite{DIF-GUT-LAN},  if $(\X,d,\mu)$ is geodesic then
\begin{equation*}
				\mbox{Doubling Property}\quad\Longrightarrow\quad\mbox{Ring-continuous}.
\end{equation*}
Moreover, by \cite{BUC}, if $(\X,d,\mu)$ is geodesic then
\begin{equation*}
				\mbox{Doubling Property}\quad\Longrightarrow\quad\delta\mbox{-Annular Decay}.
\end{equation*}
\end{remark}

\

We introduce some basic concepts that will be useful in the following sections: given any subset $G\subset\X$, we denote by $\dist(x,G)$ the infimum of all distances $d(x,y)$ where $y\in G$. Moreover, if $G$ is bounded, let $\ell(G)$ be the largest distance to the boundary for points in $G$:
\begin{equation}\label{eq-L-Omega}
		\ell(G):\,=\sup_{x\in G}\braces{\dist(x,\partial G)}\leq\frac{1}{2}\diam G.
\end{equation}

Given two subsets $A,B\subset\X$, we denote by  $A\triangle B=(A\setminus B)\cup(B\setminus A)$  the symmetric difference of $A$ and $B$. If $A,B,C\subset\X$, it follows that
\begin{equation*}
				A\triangle B=(A\triangle C)\triangle(C\triangle B)\subset(A\triangle C)\cup(C\triangle B).
\end{equation*}
If $A,B\subset\X$ are two measurable subsets, then
\begin{equation*}
				\abs{\mu(A)-\mu(B)}\leq\mu(A\triangle B).
\end{equation*}
and, from the triangle inequality,
\begin{equation}\label{mu-triangle}
				\mu(A\triangle B)\leq\mu(A\triangle C)+\mu(C\triangle B).
\end{equation}

A \emph{modulus of continuity} in a bounded domain $\Omega \subset \X$ is a non-decreasing continuous function $\omega:[0,\diam\Omega] \rightarrow [0,\infty)$ such that $\omega(0)=0$. We will often require  $\omega$ to be concave too. If $G \subset \Omega $  and  $u\in C(\overline{G})$, we will denote by $\omega_{u,G}$ a concave modulus of continuity such that
\begin{equation}\label{def-mod-cont}
				\abs{u(x) - u(y)}\leq\omega_{u,G}(d(x,y))
\end{equation}
for all $x,y\in G$.

\begin{defn}
Let $\Omega\subset\X$ be a fixed bounded open domain in a proper metric space $\X$. We say that a non-negative function $\varrho\in C(\overline\Omega)$ is an \emph{admissible radius function in $\Omega$} if $\displaystyle 0<\varrho(x)\leq \dist(x,\partial\Omega)$ for each $x\in\Omega$, and $\varrho(x)=0$ if and only if $x\in\partial\Omega$. Whenever $G\Subset\Omega$, we define 
\begin{equation} \label{rho G}
\varrho_G:\,=\inf_G\varrho>0.
\end{equation} 
Also, we introduce the following notation for closed balls in $\Omega$ with radii given by $\varrho$:
\begin{equation*}
				B_x :\,= \overline{B}(x,\varrho(x))
\end{equation*}
for each $x\in\Omega$. Since the balls $B_x$ are not necessarily contained in $G$, we define
\begin{equation}\label{hull}
				\widetilde{G}:\,=\bigcup_{x\in G}B_x.
\end{equation}
\end{defn}

Following the notation in \eqref{def-mod-cont}, we denote by $\omega_{\varrho,\Omega}$  a concave modulus of continuity for $\varrho$ in $\Omega$. Since $|\varrho (x) - \varrho (y) | \leq  \diam\Omega$ for each $x$, $y\in \Omega$,  we can also assume that  $\omega_{\varrho,\Omega} (\diam\Omega) \leq  \diam\Omega$. As we will see in the next sections, a distinguished case occurs when the admissible radius function is $L$-Lipschitz, that is,
\begin{equation*}
				\abs{\varrho(x)-\varrho(y)}\leq  L \, d(x,y),
\end{equation*}
for each $x,y\in\Omega$ , in which case we can simply take $\omega_{\varrho,\Omega}(t)= Lt$. For technical reasons, we need to define another concave modulus of continuity for $\varrho$ (that will be  denoted by $\widehat\omega_\varrho$) as follows: if $\omega_{\varrho,\Omega}(t) \leq t$ for all $t \in [0, \diam\Omega]$ then we set $\widehat\omega_\varrho (t) : \, = t $. Otherwise, we define  
\begin{equation}\label{def-hat-rho}
				\widehat\omega_\varrho(t):\,=\frac{\diam\Omega}{\omega_{\varrho,\Omega}(\diam\Omega)}\,\omega_{\varrho,\Omega}(t).
\end{equation}
Note that, defined in this way, $\widehat\omega_\varrho(t)$ is a concave modulus of continuity for $\varrho$ in $\Omega$ satisfying
\begin{equation}\label{hat-rho-bounds}
				\max\braces{t,\omega_{\varrho,\Omega}(t)}\leq\widehat\omega_\varrho(t)\leq\diam\Omega = \widehat\omega_\varrho (\diam\Omega)
\end{equation}
for each $t\in[0,\diam\Omega]$. Consequently, successive compositions of $\omega_\varrho$ with itself will produce a sequence of continuous functions $\widehat\omega_\varrho^{(n)}:[0,\diam\Omega]\rightarrow[0,\diam\Omega]$ given by
\begin{equation*}
				\widehat\omega_\varrho^{(n)}(t):\,=\widehat\omega_\varrho\pare{\widehat\omega_\varrho^{(n-1)}(t)},
\end{equation*}
for $n\in\N$, where $\widehat\omega_\varrho^{(0)}(t)=t$.


\begin{remark}
We will hereafter make use of some of the concepts introduced in this subsection (like the family of balls $\set{B_x}{x\in\Omega}$ and the operator on sets $\widetilde{(\cdot)}$) without any explicit mention of their dependence on the choice of the admissible radius function $\varrho$, which is assumed to be fixed. 
\end{remark}


\subsection{Main results}

Let $(\X,d,\mu)$ be a metric measure space. Assume that an admissible radius function $\varrho$ in a domain $\Omega\subset\X$ is given. If $u\in C(\overline{\Omega})$, $x\in\Omega$ and $\alpha\in\R$ we define:
\begin{flalign}
				\M u(x)&:\,= \dashint_{B_x} u\ d\mu, \label{def-M} \\
				\S u(x)&:\,= \frac{1}{2}\, \pare{\sup_{B_x}u + \,  \inf_{B_x}u}, \label{def-S}  \\
				\T_\alpha u(x) & :\,= \alpha\,\S u(x)+(1-\alpha)\M u(x). \label{def-T}             
\end{flalign}
%
%
%
We are interested in studying the fixed points of the operators $\T_{\alpha}$, which can be seen as functions satisfying an specific nonlinear mean value property. For that reason, we give the following fundamental definition. 

\begin{defn} \label{def alpha mvp}
Let $(\X,d,\mu)$ be a metric measure space, $\Omega\subset\X$ a domain and  $\varrho$ an admissible radius function in $\Omega$. Let $\alpha\in \R$.  A function $u\in C(\overline{\Omega})$ is said to satisfy the \emph{$\alpha$-mean value property} in $\Omega$ if it is a solution of the functional equation
\begin{equation*}
				\T_\alpha u= u.
\end{equation*}

\end{defn}

\

The case $\alpha = 0$ is interesting enough by itself. Harmonicity in a metric measure space $\X$ in connection to the mean value property has been recently introduced in \cite{GAC-GOR} and \cite{ADA-GAC-GOR} in the following way: a locally integrable function in a domain $\Omega \Subset \X $ is said \emph{strongly harmonic} in $\Omega $ if it satisfies the mean value property in any ball compactly contained in $\Omega$. The following regularity result has been obtained in  \cite{ADA-GAC-GOR}:  

\begin{Thm}[{\cite[Thm. 4.2]{ADA-GAC-GOR}}] 
If $(\X , d, \mu )$ is a doubling metric measure space satisfying a $\delta$-annular decay condition for some $\delta \in (0,1] $  then every locally bounded, strongly harmonic function  $u$ in a domain $\Omega \subset \X$ is locally $\delta$-H\"{o}lder continuous in $\Omega$. In particular, if $\delta = 1$ then  $u$ is locally Lipschitz continuous in $\Omega$. 
\end{Thm}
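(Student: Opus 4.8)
The plan is to prove the local H\"older/Lipschitz estimate for a strongly harmonic function $u$ in $\Omega$ by combining the mean value property over two nearby balls with the $\delta$-annular decay property to control the error.

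\medskip

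First I would fix a compactly contained subdomain $G\Subset\Omega$ and a scale: for $x,y\in G$ with $d(x,y)$ small, pick a radius $r>0$ comparable to $\dist(G,\partial\Omega)$ so that both $B(x,r)$ and $B(y,r)$ lie inside a slightly larger ball compactly contained in $\Omega$, where $u$ is bounded, say $|u|\le M$ there. Using strong harmonicity, write
\[
u(x)-u(y)=\dashint_{B(x,r)}u\,d\mu-\dashint_{B(y,r)}u\,d\mu .
\]
The standard trick is to compare the two averages by writing each integral against the common reference and splitting the domains of integration into the intersection $B(x,r)\cap B(y,r)$ and the two "crescents" $B(x,r)\setminus B(y,r)$ and $B(y,r)\setminus B(x,r)$. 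Since each ball of radius $r$ is contained in the ball of radius $r+d(x,y)$ centered at the other point, each crescent is contained in an annulus $B(x,r+d(x,y))\setminus B(x,r)$ (and symmetrically), whose measure is controlled by \eqref{delta-AD}:
\[
\mu\bigl(B(x,r+d(x,y))\setminus B(x,r)\bigr)\le D_\delta\Bigl(\frac{d(x,y)}{r+d(x,y)}\Bigr)^\delta\mu\bigl(B(x,r+d(x,y))\bigr).
\]
By the doubling property, $\mu(B(x,r+d(x,y)))$ is comparable to $\mu(B(x,r))$ as long as $d(x,y)\le r$, so the relative measure of each crescent is $\lesssim (d(x,y)/r)^\delta$.

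\medskip

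Next I would estimate $|u(x)-u(y)|$ termwise. On the intersection part, the difference of the two normalized averages over the same set $B(x,r)\cap B(y,r)$ is bounded by $|u|$ times the discrepancy between the normalizing factors $1/\mu(B(x,r))$ and $1/\mu(B(y,r))$ together with the measure of the symmetric difference; both are again controlled by the annular decay estimate. On each crescent the contribution is at most $M$ times its relative measure, hence $\lesssim M(d(x,y)/r)^\delta$. Collecting terms yields
\[
|u(x)-u(y)|\le C\,M\Bigl(\frac{d(x,y)}{r}\Bigr)^\delta
\]
for $d(x,y)\le r$, with $C$ depending only on $D_\mu$ and $D_\delta$; choosing $r\sim\dist(G,\partial\Omega)$ gives the local $\delta$-H\"older bound on $G$, and letting $G$ exhaust $\Omega$ gives local $\delta$-H\"older continuity in $\Omega$. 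When $\delta=1$ this is exactly a local Lipschitz estimate. For $d(x,y)\ge r$ the bound is trivial from $|u|\le M$.

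\medskip

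The main obstacle, I expect, is not the crescent terms (which are immediate from \eqref{delta-AD}) but the bookkeeping on the intersection term: one must carefully handle the mismatch of the two different normalizing denominators $\mu(B(x,r))^{-1}$ and $\mu(B(y,r))^{-1}$ and show that $|\mu(B(x,r))-\mu(B(y,r))|$ is itself $\lesssim (d(x,y)/r)^\delta\mu(B(x,r))$; this follows because $B(x,r)\triangle B(y,r)$ is contained in the union of the two annuli above, so the annular decay property applies directly. A secondary technical point is ensuring the enlarged balls stay within a fixed compact subset of $\Omega$ where $u$ is bounded, which forces the restriction $d(x,y)\le r\le c\,\dist(G,\partial\Omega)$ and hence the merely \emph{local} nature of the conclusion.
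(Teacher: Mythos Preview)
Your proposal is correct and follows essentially the same route as the paper. Note that this theorem is only cited here (from \cite{ADA-GAC-GOR}) rather than proved, but the paper's own machinery in \Cref{LEMMA-Mu} and \Cref{RES-RC-LIP} amounts to exactly your argument: bound $|u(x)-u(y)|$ by $2\|u\|_\infty\,\mu(B_x\triangle B_y)/\max\{\mu(B_x),\mu(B_y)\}$ (your ``intersection bookkeeping'' is packaged into \Cref{LEMMA-Mu}), then control the symmetric difference via the annular inclusions $B_x\setminus B_y\subset B_x\setminus B(x,r-d(x,y))$ and the $\delta$-annular decay, precisely as you do.
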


(See also Lemma 2.3 in \cite{ARR-LLO-16-1}, where the local H\"older continuity of functions satisfying the mean value property  for a single radius is obtained if $\X = \R^n$, $\mu$ is doubling and the radius function is $1$-Lipschitz). \\

We have obtained the following generalizations for functions satisfying the $0$-mean value property in the sense of \Cref{def alpha mvp} with respect to some admissible radius function.  

\begin{cor-ref}{\ref{M regularity}}
Let $(\X,d,\mu)$ be a proper metric measure space satisfying the $\delta$-annular decay property for some $\delta\in(0,1]$. Suppose that there is $\gamma \in (0,1]$ such that $\varrho$ is a $\gamma$-H\"{o}lder continuous admissible radius function in a bounded domain $\Omega\subset\X$. Then any $u\in L^{\infty} (\Omega )$ satisfying the $0$-mean value property in $\Omega$ with respect to the radius admissible function $\varrho$ (that is, $\M  u = u$) is locally $ \gamma \delta $-H\"{o}lder continuous in $\Omega$. In particular, if  $\delta = \gamma = 1$ then $u$ is locally Lipschitz continuous in $\Omega$.  
\end{cor-ref}

As for the general case $\alpha \neq 0$, our main result requires certain rigid control of the radius function. 

\begin{thm-ref}{\ref{main}}
Let $(\X,d,\mu)$ be a proper, geodesic metric measure space satisfying the $\delta$-annular decay condition for some $\delta\in(0,1]$ and let $\Omega \subset \X$ be a bounded domain. Suppose that  $\varrho$ is a Lipschitz admissible radius function in $\Omega$ with Lipschitz constant $L \geq 1$ such that
\begin{equation*}
				\lambda\dist(x,\partial\Omega)^\beta\leq\varrho(x)\leq\varepsilon\dist(x,\partial\Omega),
\end{equation*}
for all $x\in \Omega$, where $0 < \lambda \leq \ell(\Omega )^{1-\beta}\varepsilon$. Assume also that  
\begin{equation*}
\begin{split}
				\abs{\alpha} & < L^{-1},  \\
				0 <  \ \varepsilon ~& < 1-L\abs{\alpha},
\end{split}
\end{equation*}
and choose $\beta$ so that 
\begin{equation*} 
1  \, \leq \, \beta<\frac{\log\displaystyle\frac{1}{L\abs{\alpha}}}{\, \log\displaystyle\frac{1}{1-\varepsilon}\, }. 
\end{equation*}
Then any $u\in C(\overline{\Omega})$ verifying the $\alpha$-mean value property in $\Omega$ with respect to $\varrho$ (that is, $\T_{\alpha} u = u$) is locally $\delta$-H\"{o}lder continuous in $\Omega$. In particular, if $\delta = 1$ then $u$ is locally Lipschitz continuous in $\Omega$. 
\end{thm-ref}

In the particular case $\beta = 1$ we get the following corollary.

\begin{cor-ref}{\ref{reg beta=1}}
Let $(\X,d,\mu)$ be a proper, geodesic metric measure space satisfying the
$\delta$-annular decay condition for some $\delta\in(0,1]$ and let $\Omega \subset \X$ be a bounded domain. Suppose that  $\varrho$ is a Lipschitz admissible radius function in $\Omega$ with Lipschitz constant $L \geq 1$ such that
\begin{equation*}
				\lambda\dist(x,\partial\Omega)\leq\varrho(x)\leq\varepsilon\dist(x,\partial\Omega),
\end{equation*}
for all $x\in \Omega$, where $0 < \lambda \leq \varepsilon$. Assume also that  
\begin{equation*}
\begin{split}
				\abs{\alpha} & < L^{-1},  \\
				0 <  \ \varepsilon ~& < 1-L\abs{\alpha}.
\end{split}
\end{equation*}
Then any $u\in C(\overline{\Omega})$ verifying the $\alpha$-mean value property in $\Omega$ with respect to $\varrho$ (that is, $\T_{\alpha} u = u$) is locally $\delta$-H\"{o}lder continuous in $\Omega$. In particular, if $\delta = 1$ then $u$ is locally Lipschitz continuous in $\Omega$. 
\end{cor-ref}

We  obtain further regularity for solutions of the $\alpha$-mean value property assuming that they are continuous in $\overline{\Omega}$. This explains the \textit{a priori} in the title. However, the existence part is not discussed here. Compare with \cite{ARR-LLO-16-1}, where existence and uniqueness of the Dirichlet problem are established if  $\X = \R^n$, $\Omega \subset  \R^n$ is bounded and strictly convex and $\mu$ is Lebesgue measure. In this particular case, the connection between $p$-harmonious functions and the  $\alpha$-mean value property has already been pointed out at the introduction,  where $\alpha$ and $p$ are related by \eqref{alpha} (note that the intervals $1 < p < \infty$ and $2 \leq p < \infty$ correspond, respectively, to the intervals $-\frac{1}{n+1} < \alpha < 1$ and $0 \leq \alpha < 1$). This explains the term \textit{generalized $p$-harmonious} in the title, even though the link between $p$ and $\alpha$ is missing in the general metric space case.


\section{Basic Estimates for $\M$ and $\S$}

\subsection{Continuity of $\M$}

We will first look at the continuity and regularity of the function
\begin{equation}\label{def-pre-M}
				x\longmapsto\M u(x)=\dashint_{B_x}u\ d\mu
\end{equation}
where an admissible radius function $\varrho$ in a domain $\Omega\subset\X$, a measure $\mu$ and a bounded, continuous function $u$ in $\Omega$ are given. The following Lemma is a preliminary result in this direction.

\begin{lemma}\label{LEMMA-Mu}
Let $(\X,d,\mu)$ be a metric measure space. If $B_1$ and $B_2$ are two balls contained in $\X$, then
\begin{equation}\label{M-est-12}
				\abs{\dashint_{B_1} u\ d\mu-\dashint_{B_2} u\ d\mu} \leq 2\norm{u}_\infty \frac{\mu(B_1\triangle B_2)}{\max\braces{\mu(B_1),\mu(B_2)}},
\end{equation}
for each $u\in L^\infty(\X)$.
\end{lemma}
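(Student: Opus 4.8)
The plan is to write the difference of averages over a common reference and estimate each piece using the symmetric difference. Write $m_i = \mu(B_i)$ and $I_i = \int_{B_i} u\,d\mu$, so that $\dashint_{B_i} u\,d\mu = I_i/m_i$. Then
\begin{equation*}
\frac{I_1}{m_1} - \frac{I_2}{m_2} = \frac{I_1 - I_2}{m_1} + I_2\pare{\frac{1}{m_1} - \frac{1}{m_2}} = \frac{I_1 - I_2}{m_1} + \frac{I_2}{m_2}\cdot\frac{m_2 - m_1}{m_1}.
\end{equation*}
For the first term, $|I_1 - I_2| = \abs{\int u\,(\chi_{B_1} - \chi_{B_2})\,d\mu} \leq \norm{u}_\infty\,\mu(B_1\triangle B_2)$, since $\chi_{B_1} - \chi_{B_2}$ is supported on $B_1\triangle B_2$ and bounded by $1$ there. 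For the second term, $|I_2/m_2| = \abs{\dashint_{B_2}u\,d\mu} \leq \norm{u}_\infty$, and $|m_2 - m_1| = \abs{\mu(B_2) - \mu(B_1)} \leq \mu(B_1\triangle B_2)$ by the inequality recorded just before the lemma. Combining, $\abs{I_1/m_1 - I_2/m_2} \leq 2\norm{u}_\infty\,\mu(B_1\triangle B_2)/m_1$.

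By symmetry of the left-hand side in the roles of $B_1$ and $B_2$, the same argument with the indices swapped gives the bound with $m_2$ in the denominator, so we may replace $m_1$ by $\max\{m_1,m_2\}$, which yields exactly \eqref{M-est-12}.

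There is essentially no obstacle here: the only mild point is to make sure the algebraic splitting is the one that produces the factor $2$ rather than a worse constant, and to invoke the already-stated bound $\abs{\mu(A)-\mu(B)}\leq\mu(A\triangle B)$ for the measures rather than re-deriving it. One should note for safety that $\mu(B_1\triangle B_2) < \infty$ (true since balls have finite measure), so all quantities are finite and the manipulations are legitimate; and that $m_1, m_2 > 0$ by the standing assumption $0 < \mu(B) < \infty$ on balls, so no division by zero occurs.
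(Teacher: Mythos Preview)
Your proof is correct and follows essentially the same approach as the paper: the algebraic identity you use is precisely the paper's identity after multiplying through by $m_1$, and the two estimates (for $|I_1-I_2|$ and for $|m_1-m_2|$) are identical. The only cosmetic difference is that the paper assumes $\mu(B_1)\geq\mu(B_2)$ at the outset, while you obtain the bound with $m_1$ in the denominator and then invoke symmetry to get $\max\{m_1,m_2\}$.
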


\begin{proof}
We can assume that $\mu(B_1)\geq\mu(B_2)$, then
\begin{multline*}
				\mu(B_1)\pare{\dashint_{B_1} u\ d\mu - \dashint_{B_2}u\ d\mu} \\
				= \int_{B_1} u\ d\mu - \int_{B_2}u\ d\mu+\pare{\mu(B_2)-\mu(B_1)}\dashint_{B_2}u\ d\mu,
\end{multline*}
and estimating this, we obtain
\begin{equation*}
\begin{split}
				\mu(B_1)\abs{\dashint_{B_1} u\ d\mu - \dashint_{B_2}u\ d\mu}
                ~& \leq \abs{\int_{B_1} u\ d\mu - \int_{B_2}u\ d\mu}+\norm{u}_\infty \abs{\mu(B_2)-\mu(B_1)} \\
 				~& \leq \int_{B_1\triangle B_2} \abs{u}\ d\mu+\norm{u}_\infty \mu(B_1\triangle B_2) \\
				~& \leq 2\norm{u}_\infty\mu(B_1\triangle B_2).
\end{split}
\end{equation*}
\end{proof}

The following corollary follows from \Cref{LEMMA-Mu} and the fact that $|| \M v ||_{\infty} \leq ||v||_{\infty}$ if $v\in L^{\infty}(\Omega )$. 
\begin{cor}\label{RES-M-EST}
Let $(\X,d,\mu)$ be a metric measure space. Let $\Omega\subset\X$ be a domain and $\varrho$ an admissible radius function in $\Omega$. Then, for each $u\in L^\infty(\Omega)$ and all $x$, $y\in \Omega$ we have
\begin{equation}\label{M-est}
				\abs{\M^n u(x)-\M^n u(y)} \leq 2\norm{u}_\infty \frac{\mu(B_x\triangle B_y)}{\max\braces{\mu(B_x),\mu(B_y)}}.
\end{equation}
\end{cor}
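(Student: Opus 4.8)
The plan is to deduce the estimate directly from \Cref{LEMMA-Mu}, the only extra ingredient being that $\M$ is non-expansive in the sup norm. First I would note that for any bounded (Borel) function $v$ and any $x\in\Omega$,
\begin{equation*}
				\abs{\M v(x)}=\abs{\dashint_{B_x}v\ d\mu}\leq\dashint_{B_x}\abs{v}\ d\mu\leq\norm{v}_\infty,
\end{equation*}
so that $\norm{\M v}_\infty\leq\norm{v}_\infty$; since $\varrho$ is admissible the balls $B_x$ stay inside $\overline\Omega$, so iterating is legitimate and, by induction, $\norm{\M^{k}u}_\infty\leq\norm{u}_\infty$ for every $k\geq0$ whenever $u\in L^\infty(\Omega)$.

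Next, for a fixed $n\geq1$ I would write $\M^n u(x)=\M(\M^{n-1}u)(x)=\dashint_{B_x}\M^{n-1}u\ d\mu$ and likewise at $y$, and then apply \Cref{LEMMA-Mu} to the bounded function $\M^{n-1}u$ with the two balls $B_1=B_x$ and $B_2=B_y$. This gives
\begin{equation*}
				\abs{\M^n u(x)-\M^n u(y)}\leq 2\norm{\M^{n-1}u}_\infty\,\frac{\mu(B_x\triangle B_y)}{\max\braces{\mu(B_x),\mu(B_y)}},
\end{equation*}
and substituting $\norm{\M^{n-1}u}_\infty\leq\norm{u}_\infty$ yields exactly \eqref{M-est}. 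The case $n=1$ is nothing but \Cref{LEMMA-Mu}.

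I do not anticipate any genuine difficulty: the statement is a formal corollary of \Cref{LEMMA-Mu}. The only point deserving a moment's attention is that the iterates $\M^k u$ be well defined --- i.e.\ that each $B_x$ has finite positive $\mu$-measure and that $x\mapsto\M^k u(x)$ is again a bounded Borel function, so the next iterate makes sense --- which is guaranteed by the standing hypotheses that $0<\mu(B)<\infty$ for every ball and that $\X$ is proper. The feature worth emphasizing is that the constant $2\norm{u}_\infty$ does not depend on $n$, and this is precisely because the sup norm of the iterates never increases.
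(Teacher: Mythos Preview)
Your proof is correct and follows exactly the approach of the paper: the paper states that the corollary ``follows from \Cref{LEMMA-Mu} and the fact that $\norm{\M v}_\infty\leq\norm{v}_\infty$ if $v\in L^\infty(\Omega)$,'' which is precisely what you spell out by applying \Cref{LEMMA-Mu} to $\M^{n-1}u$ and using the non-expansiveness of $\M$ in the sup norm.
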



The importance of \Cref{RES-M-EST} lies in the fact that the continuity of $\M u$ can be transferred from the continuity of the function
\begin{equation}\label{mu-Bx-function}
				x\longmapsto\mu(B_x)=\mu(B(x,\varrho(x)),
\end{equation}
without any dependence of the  function $u$. To see that,  consider any $x,y\in\Omega$ and $r_1,r_2>0$ and recall \eqref{mu-triangle}. Then
\begin{equation}\label{ineq-aux-001}
				\mu(B(x,r_1)\triangle B(y,r_2)) \leq\mu(B(x,r_1)\triangle B(x,r_2)) + \mu(B(x,r_2)\triangle B(y,r_2)).
\end{equation}
%
Now suppose that $\mu$ is ring-continuous (recall \Cref{DEF-ring-cont}). Then, since $B(x,r_1)\subset B(x,r_2)$ or $B(x,r_2)\subset B(x,r_1)$,  the first term in the right hand side of \eqref{ineq-aux-001} is equal to $\abs{\mu(B(x,r_1))-\mu(B(x,r_2))}$. For the second term, we recall the following result due to Gaczkowski and G\'orka:

\begin{Lem}[{\cite[Theorem 2.1]{GAC-GOR}}]
Let $(\X,d,\mu)$ be a metric measure space such that $\mu$ is ring-continuous. Then for each  $x\in\X$ and each $r>0$,
\begin{equation}\label{mu-cont-wrt-d}
				\lim_{y\rightarrow x}\mu\pare{B(x,r)\triangle B(y,r)}=0.
\end{equation}
Moreover, the function $x\mapsto\mu(B(x,r))$ is continuous (w.r.t. $d$) for each fixed $r>0$.
\end{Lem}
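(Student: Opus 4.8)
The plan is to control the symmetric difference $B(x,r)\triangle B(y,r)$ by a thin annulus centered at the fixed point $x$, and then to invoke ring-continuity, which is precisely continuity of the radial measure function $t\mapsto\mu(B(x,t))$. First I would fix $x\in\X$ and $r>0$, and take $y$ with $s:\,=d(x,y)<r$ (only small $s$ matters for the limit). The triangle inequality yields the two-sided containment
\[
B(x,r-s)\subset B(y,r)\subset B(x,r+s),
\]
since $d(z,x)<r-s$ forces $d(z,y)\le d(z,x)+s<r$, while $d(z,y)<r$ forces $d(z,x)\le d(z,y)+s<r+s$.

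From this sandwich I would deduce that every point of $B(x,r)\triangle B(y,r)$ lies in the annulus $B(x,r+s)\setminus B(x,r-s)$. Indeed, a point $z\in B(x,r)\setminus B(y,r)$ satisfies $d(z,x)<r\le r+s$, so $z\in B(x,r+s)$, and, failing to belong to $B(y,r)\supset B(x,r-s)$, it cannot lie in $B(x,r-s)$; symmetrically, a point $z\in B(y,r)\setminus B(x,r)$ lies in $B(y,r)\subset B(x,r+s)$ but has $d(z,x)\ge r>r-s$, so again $z\notin B(x,r-s)$. Hence
\[
\mu\pare{B(x,r)\triangle B(y,r)}\le\mu\pare{B(x,r+s)\setminus B(x,r-s)}=\mu(B(x,r+s))-\mu(B(x,r-s)),
\]
where the last equality uses $B(x,r-s)\subset B(x,r+s)$ together with the standing assumption $0<\mu(B)<\infty$ for every ball.

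To finish \eqref{mu-cont-wrt-d} I would let $y\to x$, so that $s=d(x,y)\to0$; by ring-continuity the radial function $t\mapsto\mu(B(x,t))$ is continuous at $t=r$, whence $\mu(B(x,r+s))$ and $\mu(B(x,r-s))$ both converge to $\mu(B(x,r))$ and the right-hand side tends to $0$. The second assertion is then immediate: applying the inequality $\abs{\mu(A)-\mu(B)}\le\mu(A\triangle B)$ recorded in the preliminaries with $A=B(x,r)$ and $B=B(y,r)$ gives $\abs{\mu(B(x,r))-\mu(B(y,r))}\le\mu\pare{B(x,r)\triangle B(y,r)}\to0$ as $y\to x$, so $x\mapsto\mu(B(x,r))$ is continuous for each fixed $r$.

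There is no deep obstacle here; the crux is the geometric observation that perturbing the center by $s$ displaces a ball only within the $s$-annulus about the original center, which is exactly what converts continuity in the radius (the ring-continuity hypothesis) into continuity in the center. The only points requiring minor care are restricting to $s<r$ (harmless in a limit), using the finiteness of $\mu$ on balls to write the annular measure as a difference, and, should the balls be taken closed rather than open, replacing the strict inequalities in the containments by non-strict ones — the annular bound and the conclusion are unaffected.
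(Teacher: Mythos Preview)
The paper does not include its own proof of this lemma; it is simply quoted as \cite[Theorem~2.1]{GAC-GOR}. Your argument---sandwiching $B(y,r)$ between $B(x,r-s)$ and $B(x,r+s)$ via the triangle inequality, bounding the symmetric difference by the annulus $B(x,r+s)\setminus B(x,r-s)$, and invoking ring-continuity---is correct and is the standard proof of this fact.
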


\begin{remark}
The converse is not true (see Example 2 in \cite{ADA-GAC-GOR}). 
\end{remark}

Therefore, replacing $r_1=\varrho(x)$ and $r_2=\varrho(y)$ in \eqref{ineq-aux-001} we get the following proposition. 

\begin{prop}\label{RES-M-CONT}
Let $(\X,d,\mu)$ be a metric measure space such that $\mu$ is ring-continuous. Suppose that $\Omega\subset\X$ is a domain and $\varrho$ is a continuous admissible radius function in $\Omega$. Then, $\M:L^\infty(\Omega)\rightarrow C(\Omega)$.
\end{prop}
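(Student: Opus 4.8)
The plan is to show that $\M$ maps bounded functions to continuous functions by decomposing the variation of $\M u$ into a piece controlled by the regularity of $\mu$ on balls and a piece controlled by the admissible radius function $\varrho$. The starting point is \Cref{RES-M-EST}: for fixed $u\in L^\infty(\Omega)$ and $x,y\in\Omega$,
\begin{equation*}
\abs{\M u(x)-\M u(y)} \leq 2\norm{u}_\infty\,\frac{\mu(B_x\triangle B_y)}{\max\braces{\mu(B_x),\mu(B_y)}},
\end{equation*}
so it suffices to prove that, as $y\to x$, the numerator $\mu(B_x\triangle B_y)$ tends to $0$ while the denominator stays bounded away from $0$. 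Fixing $x\in\Omega$, set $r_0=\varrho(x)>0$; since $\mu(B(x,r_0))>0$ and (by ring-continuity and the Gaczkowski--G\'orka lemma) the function $y\mapsto\mu(B(y,r_0))$ is continuous, we have $\mu(B_y)=\mu(B(y,\varrho(y)))$ bounded below near $x$ once we also use continuity of $\varrho$ and ring-continuity in the radial variable; in any case $\max\braces{\mu(B_x),\mu(B_y)}\geq\mu(B_x)>0$ already suffices for the denominator.

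For the numerator, I would use the triangle-type inequality \eqref{ineq-aux-001} with $r_1=\varrho(x)$ and $r_2=\varrho(y)$:
\begin{equation*}
\mu(B_x\triangle B_y) \leq \mu\pare{B(x,\varrho(x))\triangle B(x,\varrho(y))} + \mu\pare{B(x,\varrho(y))\triangle B(y,\varrho(y))}.
\end{equation*}
The first term: since one of $B(x,\varrho(x))$, $B(x,\varrho(y))$ contains the other, it equals $\abs{\mu(B(x,\varrho(x)))-\mu(B(x,\varrho(y)))}$, which tends to $0$ as $y\to x$ because $\varrho$ is continuous at $x$ and $\mu$ is ring-continuous (the map $r\mapsto\mu(B(x,r))$ is continuous). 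The second term: writing $r=\varrho(y)$, this is $\mu(B(x,r)\triangle B(y,r))$; a slight subtlety is that $r$ is not fixed but varies with $y$, so I would either invoke a uniform version of \eqref{mu-cont-wrt-d} on a compact range of radii near $r_0$, or — more simply — bound $\mu(B(x,\varrho(y))\triangle B(y,\varrho(y)))$ by first replacing $\varrho(y)$ with $r_0$ at the cost of another ring-continuity term and then applying \eqref{mu-cont-wrt-d} at the fixed radius $r_0$. Either way both terms go to $0$, giving $\abs{\M u(x)-\M u(y)}\to 0$, hence $\M u\in C(\Omega)$.

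The main obstacle is precisely the second term with its \emph{moving} radius $\varrho(y)$: the Gaczkowski--G\'orka lemma is stated for a fixed radius, so I must be careful not to apply it with a radius depending on the very point $y$ along which I am taking the limit. The clean fix is the three-term estimate
\begin{equation*}
\mu\pare{B(x,\varrho(y))\triangle B(y,\varrho(y))} \leq \mu\pare{B(x,\varrho(y))\triangle B(x,r_0)} + \mu\pare{B(x,r_0)\triangle B(y,r_0)} + \mu\pare{B(y,r_0)\triangle B(y,\varrho(y))},
\end{equation*}
where the outer two terms are differences of measures of concentric balls of radii $\varrho(y)$ and $r_0$ (close, by continuity of $\varrho$, and controlled via ring-continuity, using doubling to keep the comparison uniform in $y$), and the middle term is exactly \eqref{mu-cont-wrt-d} at the fixed radius $r_0$. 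Once this is in place the argument is routine, and no hypothesis beyond ring-continuity of $\mu$ and continuity of $\varrho$ is needed. Note that this gives continuity of $\M u$ on $\Omega$ but not necessarily up to $\overline{\Omega}$, consistent with the statement.
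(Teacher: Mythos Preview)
Your argument follows the paper's exactly: reduce via \Cref{RES-M-EST} to showing $\mu(B_x\triangle B_y)\to 0$, then split using \eqref{ineq-aux-001} with $r_1=\varrho(x)$, $r_2=\varrho(y)$, handling the concentric piece by ring-continuity and the equal-radius piece by the Gaczkowski--G\'orka lemma. You correctly flag the moving-radius subtlety in the second piece (which the paper passes over in silence), and your three-term fix is the right idea.

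One slip: you invoke doubling to control the term $\mu\pare{B(y,r_0)\triangle B(y,\varrho(y))}$, but doubling is \emph{not} among the hypotheses here---only ring-continuity is. This term can be handled without it: since it is an annulus centered at $y$, contain it in an annulus centered at the \emph{fixed} point $x$, namely
\[
B\bigl(x,\max\{r_0,\varrho(y)\}+d(x,y)\bigr)\setminus B\bigl(x,\min\{r_0,\varrho(y)\}-d(x,y)\bigr),
\]
whose measure tends to $0$ as $y\to x$ by ring-continuity at $x$ alone (both radii converge to $r_0$). With this adjustment your proof is complete, and in fact more careful than the paper's on this point.
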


\begin{remark}
By definition, the continuous admissible radius function $\varrho$ vanishes on the boundary of the domain $\Omega$, thus $\mu(B_x)$ tends to zero as $x$ approaches the boundary of $\Omega$. In consequence, estimates obtained from \eqref{M-est} are local, that is, they only make sense on compact subsets $K\subset\Omega$.
\end{remark}

\subsection{Estimates for $\M$}

Let $\Omega\subset\X$ be a given domain in a metric measure space $(\X,d,\mu)$ and let $K\subset\Omega$ be a compact subset. In this section we will construct moduli of continuity $\W_{\mu,K}$ depending on $\mu$, $\varrho$ and $K$ such that
\begin{equation}\label{def-mod-WK-0}
				\frac{\mu(B_x\triangle B_y)}{\max\braces{\mu(B_x),\mu(B_y)}}\leq\frac{1}{2}\,\W_{\mu,K}(d(x,y)),
\end{equation}
for every $x,y\in K$. Hence, by \eqref{M-est}, we would have
\begin{equation}\label{def-mod-WK-1}
				\abs{\M^nu(x)-\M^nu(y)}\leq \norm{u}_\infty\W_{\mu,K}(d(x,y)),
\end{equation}
for each $n\in\N$.

\begin{lemma}\label{RES-RC-LIP}
Let $(\X,d,\mu)$ be a metric measure space satisfying the $\delta$-annular decay property \eqref{delta-AD} for some $\delta\in(0,1]$ and $D_\delta\geq 1$. Suppose that $\varrho$ is a $L$-Lipschitz admissible radius function in a domain $\Omega\subset\X$ for some $L\geq 1$. Then, for any compact set $K\subset\Omega$ and each $x$, $y\in K$ we have
\begin{equation}\label{eq-mod-cont-delta}
				\frac{\mu(B_x\triangle B_y)}{\max\braces{\mu(B_x),\mu(B_y)}}\leq
                4\,L\,\,D_\delta\pare{\frac{d(x,y)}{\varrho_K}}^\delta.
\end{equation}
\end{lemma}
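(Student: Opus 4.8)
The strategy is to control $\mu(B_x \triangle B_y)$ by comparing both balls to a single common ball centered at $x$, using the triangle inequality \eqref{mu-triangle}, and then to apply the $\delta$-annular decay property \eqref{delta-AD} to the resulting concentric annuli. Write $r=\varrho(x)$, $s=\varrho(y)$, and let $t=d(x,y)$. First I would observe that $B_y = \overline{B}(y,s) \subset \overline{B}(x, s+t)$ and conversely $\overline{B}(x, s-t) \subset B_y$ (when $s>t$; otherwise this is vacuous), so that
\begin{equation*}
	B_x \triangle B_y \subset \brackets{\overline{B}(x, r) \triangle \overline{B}(x, s-t)} \cup \brackets{\overline{B}(x, s+t) \setminus \overline{B}(x, s-t)},
\end{equation*}
and more simply $B_x \triangle B_y \subset \overline{B}(x,R) \setminus \overline{B}(x,\rho)$ where $R = \max\braces{r, s+t}$ and $\rho = \min\braces{r, s-t}$ (interpreting $\overline{B}(x,\rho)=\emptyset$ if $\rho \le 0$). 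Using the $L$-Lipschitz bound $\abs{r-s}\leq Lt$ and $L\geq 1$, one gets $R-\rho \leq \abs{r-s} + 2t \leq Lt + 2t \leq 4Lt$ (indeed $Lt+2t \le Lt + 2Lt = 3Lt \le 4Lt$), while on the other hand $R \geq r = \varrho(x) \geq \varrho_K$ since $x\in K$.

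Next, applying the $\delta$-annular decay property \eqref{delta-AD} to the concentric balls $\overline{B}(x,\rho)\subset \overline{B}(x,R)$ yields
\begin{equation*}
	\mu(B_x \triangle B_y) \leq \mu\pare{\overline{B}(x,R)\setminus \overline{B}(x,\rho)} \leq D_\delta \pare{\frac{R-\rho}{R}}^\delta \mu(\overline{B}(x,R)) \leq D_\delta \pare{\frac{4Lt}{\varrho_K}}^\delta \mu(\overline{B}(x,R)),
\end{equation*}
where a small technical point is that \eqref{delta-AD} as stated is for open balls $B(x,\cdot)$, but since $\mu$ is doubling (hence in particular, by the stated remark, ring-continuous) the estimate passes to closed balls by a routine limiting argument; alternatively one enlarges $R$ slightly and lets the enlargement tend to $0$. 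It then remains to bound $\mu(\overline{B}(x,R))$ by a constant multiple of $\max\braces{\mu(B_x),\mu(B_y)}$. Here I would split into the two cases $R=r$ and $R=s+t$: in the first case $\mu(\overline{B}(x,R)) = \mu(B_x) \leq \max\braces{\mu(B_x),\mu(B_y)}$ directly; in the second case I would note $s+t \le s + Ls + 2s$ is not quite what is wanted, so instead compare with $B_y$, using $\overline{B}(x,s+t)\subset \overline{B}(y, s+2t)$ and, since $R-\rho \le 4Lt$ forces the annulus to be thin relative to $R$, write $\mu(\overline{B}(x,R)) = \mu(\overline{B}(x,\rho)) + \mu(\overline{B}(x,R)\setminus\overline{B}(x,\rho)) \le \mu(B_y) + D_\delta(4Lt/\varrho_K)^\delta \mu(\overline{B}(x,R))$ — but this is circular unless the last term is small.

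The cleanest way to close the argument, and the step I expect to require the most care, is therefore to absorb: one shows that either $d(x,y) \geq \varrho_K/(8L D_\delta^{1/\delta})$, in which case the claimed bound \eqref{eq-mod-cont-delta} holds trivially because its right-hand side is $\geq 4LD_\delta (1/(8LD_\delta^{1/\delta}))^\delta \geq 1/2 \geq \mu(B_x\triangle B_y)/\max\braces{\mu(B_x),\mu(B_y)}$ (using $L, D_\delta \ge 1$), or else $d(x,y)$ is small enough that $D_\delta(4Lt/\varrho_K)^\delta \le \tfrac12$, whence from $\mu(\overline{B}(x,R)) \le \mu(B_\ast) + \tfrac12 \mu(\overline{B}(x,R))$ (with $B_\ast$ the appropriate one of $B_x$, $B_y$) we absorb to get $\mu(\overline{B}(x,R)) \le 2\max\braces{\mu(B_x),\mu(B_y)}$, and combining with the displayed annular estimate gives $\mu(B_x\triangle B_y) \le 2 D_\delta (4Lt/\varrho_K)^\delta \max\braces{\mu(B_x),\mu(B_y)}$. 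Dividing through and adjusting the constant (absorbing the factor $2$ and $4^\delta \le 4$ into the stated $4L D_\delta$) yields \eqref{eq-mod-cont-delta}. The main obstacle is thus not any single estimate but rather the bookkeeping of which common ball to use and the case analysis needed to replace $\mu(\overline{B}(x,R))$ by $\max\braces{\mu(B_x),\mu(B_y)}$ without circularity; once that is set up, everything else is a direct application of \eqref{delta-AD} and the Lipschitz hypothesis.
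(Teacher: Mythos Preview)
Your plan takes a genuinely different route from the paper. The paper does not try to trap $B_x\triangle B_y$ in a single annulus centered at $x$; instead, after disposing of the large-distance case $d(x,y)>\varrho_K/(2L)$ trivially, it bounds the two pieces of the symmetric difference by two \emph{different} annuli:
\[
B_x\setminus B_y \subset B_x\setminus B(x,\varrho(y)-d(x,y)),\qquad
B_y\setminus B_x \subset B_y\setminus B(y,\varrho(x)-d(x,y)).
\]
The point is that the outer ball of each annulus is exactly $B_x$ (resp.\ $B_y$), so the factor $\mu(B(x,R))$ coming out of \eqref{delta-AD} is already $\le \max\{\mu(B_x),\mu(B_y)\}$ with no further work. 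One then adds the two estimates, uses $|\varrho(x)-\varrho(y)|\le L\,d(x,y)$ and $(L+1)\le 2L$, and reads off the constant $4LD_\delta$. Your single-annulus approach forces the outer radius to be $R=\max\{r,s+t\}$, which may exceed both $\varrho(x)$ and $\varrho(y)$, and that is precisely why you are pushed into an absorption argument the paper never needs.

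More importantly, the absorption argument as you wrote it does not close. In your ``trivial'' branch you assert $\mu(B_x\triangle B_y)/\max\{\mu(B_x),\mu(B_y)\}\le \tfrac12$; this is false --- the elementary bound is only $\le 2$ (take $B_x$, $B_y$ disjoint of equal measure), so showing the right-hand side of \eqref{eq-mod-cont-delta} is at least $\tfrac12$ is not enough. In your ``small'' branch, the threshold $t<\varrho_K/(8LD_\delta^{1/\delta})$ yields $D_\delta(4Lt/\varrho_K)^\delta < 2^{-\delta}$, which is strictly larger than $\tfrac12$ when $\delta<1$, so you cannot absorb. And even after repairing both thresholds, your annulus has width $R-\rho\le (L+2)t$ rather than the paper's $(L+1)t$, and the absorption contributes an extra factor $2$, so the best constant your route yields is of order $2(L+2)^\delta D_\delta\le 6LD_\delta$, not $4LD_\delta$; the final ``absorb $2\cdot 4^\delta$ into $4$'' step is arithmetic that does not hold.

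The upshot: your geometric decomposition is correct and the method is salvageable if one is willing to accept a larger constant (which would suffice for all downstream uses in the paper), but to prove the lemma with the stated constant $4LD_\delta$ you should switch to the two-annulus decomposition above, which makes the comparison with $\max\{\mu(B_x),\mu(B_y)\}$ immediate and eliminates the absorption step entirely.
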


\begin{proof}
Since $\varrho$ is $L$-Lipschitz by assumption, $\abs{\varrho(x)-\varrho(y)}\leq L\, d(x,y)$. Then:
\setlength{\leftmargini}{14pt}
\begin{enumerate}[label=\roman*)]
\item if $d(x,y)>\displaystyle\frac{\varrho_K}{2L}$, then $\displaystyle D_\delta\pare{\frac{2L\,d(x,y)}{\varrho_K}}^\delta>1$, and
\begin{equation*}
\begin{split}
				\mu(B_x\triangle B_y) ~& \leq 2\max\braces{\mu(B_x),\mu(B_y)} \\
				~& <2\,D_\delta\pare{\frac{2L\,d(x,y)}{\varrho_K}}^\delta\max\braces{\mu(B_x),\mu(B_y)}.
\end{split}
\end{equation*}
\item If $d(x,y)\leq\displaystyle\frac{\varrho_K}{2L}$ then, since $L\geq 1$, we get that $\abs{\varrho(x)-\varrho(y)}\leq\varrho_K/2$ and, in particular, $\varrho(y)\geq\varrho(x)/2$ and $\varrho(x)\geq\varrho(y)/2$. As a consequence, the following inclusions hold:
\begin{equation*}
\begin{split}
				B_x \setminus B_y & \subset B_x \setminus B(x,\varrho(y) - d(x,y)), \\
				B_y \setminus B_x & \subset B_y \setminus B(y,\varrho(x) - d(x,y)).
\end{split}
\end{equation*}
Thus, by \eqref{delta-AD} and the fact that $\varrho(x),\varrho(y)\geq\varrho_K$ for $x,y\in K$, we obtain
\begin{equation*}
\begin{split}
				\mu(B_x \setminus B_y)\leq D_\delta\pare{\frac{\varrho(x)-\varrho(y)+d(x,y)}{\varrho_K}}^\delta\max\braces{\mu(B_x),\mu(B_y)}, \\
				\mu(B_y \setminus B_x)\leq D_\delta\pare{\frac{\varrho(y)-\varrho(x)+d(x,y)}{\varrho_K}}^\delta\max\braces{\mu(B_x),\mu(B_y)}.
\end{split}
\end{equation*}
Using the $L$-Lipschitz assumption on $\varrho$ and adding these two quantities we get
\begin{equation*}
				\mu(B_x \triangle B_y)\leq 2\,D_\delta\pare{\frac{(L+1)\,d(x,y)}{\varrho_K}}^\delta\max\braces{\mu(B_x),\mu(B_y)},
\end{equation*}
which implies \eqref{eq-mod-cont-delta}.
\end{enumerate}
\end{proof}

\begin{remark}
Note that if $x,y$ are as in the statement of \Cref{RES-RC-LIP} then only the pointwise inequality $\abs{\varrho(x)-\varrho(y)}\leq L\, d(x,y)$ is really used in the proof.
\end{remark}

\begin{lemma}
Let $(\X,d,\mu)$ be a proper metric measure space satisfying the $\delta$-annular decay property \eqref{delta-AD} for some $\delta\in(0,1]$ and $D_\delta\geq 1$. Suppose that $\varrho$ is a continuous admissible radius function in a bounded domain $\Omega\subset\X$. Then, for any compact set $K\subset\Omega$ and each $x$,$y\in K$ we have
\begin{equation}\label{eq-mod-cont-doubling}
				\frac{\mu(B_x\triangle B_y)}{\max\braces{\mu(B_x),\mu(B_y)}}\leq
                C\pare{\frac{\widehat{\omega}_\varrho(d(x,y))}{\varrho_K}}^\delta,
\end{equation}
where $C=C(D_\delta, \mu) >0$ and $\widehat{\omega}_\varrho$ is as in  \eqref{hat-rho-bounds}. 
\end{lemma}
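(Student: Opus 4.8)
The plan is to follow the proof of \Cref{RES-RC-LIP} almost verbatim, with the Lipschitz quantity $L\,d(x,y)$ — which there controlled $\abs{\varrho(x)-\varrho(y)}$ — replaced throughout by $\widehat\omega_\varrho(d(x,y))$. Working with $\widehat\omega_\varrho$ rather than with the bare modulus $\omega_{\varrho,\Omega}$ is exactly what makes this substitution go through: by \eqref{hat-rho-bounds} we have both $\abs{\varrho(x)-\varrho(y)}\leq\omega_{\varrho,\Omega}(d(x,y))\leq\widehat\omega_\varrho(d(x,y))$ and $d(x,y)\leq\widehat\omega_\varrho(d(x,y))$, so any term of the form ``$\abs{\varrho(x)-\varrho(y)}+d(x,y)$'' produced by the triangle inequality will be $\leq 2\,\widehat\omega_\varrho(d(x,y))$. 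Fix a compact $K\subset\Omega$, take $x,y\in K$, set $t=d(x,y)$, and recall $\varrho(x),\varrho(y)\geq\varrho_K>0$.

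I would then split into two cases according to the size of $\widehat\omega_\varrho(t)$ relative to $\varrho_K$. If $\widehat\omega_\varrho(t)>\varrho_K/2$, then $\pare{2\widehat\omega_\varrho(t)/\varrho_K}^\delta>1$ and the trivial estimate $\mu(B_x\triangle B_y)\leq 2\max\braces{\mu(B_x),\mu(B_y)}$ already yields
\[
\mu(B_x\triangle B_y)\leq 2^{1+\delta}\pare{\frac{\widehat\omega_\varrho(t)}{\varrho_K}}^\delta\max\braces{\mu(B_x),\mu(B_y)}.
\]
If instead $\widehat\omega_\varrho(t)\leq\varrho_K/2$, then $t\leq\widehat\omega_\varrho(t)\leq\varrho_K/2<\min\braces{\varrho(x),\varrho(y)}$, so $\varrho(x)-t$ and $\varrho(y)-t$ are positive. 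The triangle inequality gives the inclusions $B_x\setminus B_y\subset B_x\setminus B(x,\varrho(y)-t)$ and $B_y\setminus B_x\subset B_y\setminus B(y,\varrho(x)-t)$; in the degenerate configurations $\varrho(y)-t\geq\varrho(x)$ or $\varrho(x)-t\geq\varrho(y)$ the corresponding difference is empty, and otherwise $\delta$-annular decay \eqref{delta-AD} applied with outer radius $\varrho(x)$ (resp.\ $\varrho(y)$) and inner radius $\varrho(y)-t$ (resp.\ $\varrho(x)-t$), together with $\varrho(x),\varrho(y)\geq\varrho_K$, gives
\[
\mu(B_x\setminus B_y)\leq D_\delta\pare{\frac{\abs{\varrho(x)-\varrho(y)}+t}{\varrho_K}}^\delta\max\braces{\mu(B_x),\mu(B_y)}
\]
and symmetrically for $\mu(B_y\setminus B_x)$; adding the two and using $\abs{\varrho(x)-\varrho(y)}+t\leq 2\,\widehat\omega_\varrho(t)$ we again obtain $\mu(B_x\triangle B_y)\leq 2^{1+\delta}D_\delta\pare{\widehat\omega_\varrho(t)/\varrho_K}^\delta\max\braces{\mu(B_x),\mu(B_y)}$.

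Putting the two cases together proves \eqref{eq-mod-cont-doubling} with $C=2^{1+\delta}D_\delta\leq 4D_\delta$. I expect the only point requiring care to be the bookkeeping around the inclusions: one has to observe that the configurations in which the inner radius $\varrho(y)-t$ (resp.\ $\varrho(x)-t$) would exceed the outer radius $\varrho(x)$ (resp.\ $\varrho(y)$) are exactly those in which $B_x\subset B_y$ (resp.\ $B_y\subset B_x$), so that \eqref{delta-AD} — which requires the inner radius not to exceed the outer one — is only ever invoked when it legitimately applies, and then to check that the term $t$ arising from the triangle inequality is genuinely dominated by $\widehat\omega_\varrho(t)$, which is precisely the property for which $\widehat\omega_\varrho$ was constructed in \eqref{def-hat-rho}. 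Note finally that only the existence of a concave modulus of continuity for $\varrho$ is used, so no Lipschitz hypothesis is needed.
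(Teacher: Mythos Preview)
Your proof is correct and slightly cleaner than the paper's own argument. The paper splits instead according to whether $\abs{\varrho(x)-\varrho(y)}\leq d(x,y)$: in that case it invokes \Cref{RES-RC-LIP} pointwise with $L=1$ (via the remark following that lemma) and then uses $d(x,y)\leq\widehat\omega_\varrho(d(x,y))$; when $\abs{\varrho(x)-\varrho(y)}>d(x,y)$, one ball is contained in the other, and the paper encloses $B_x\setminus B_y$ in the annulus $B(y,\varrho(x)+d(x,y))\setminus B_y$, applies $\delta$-annular decay there, and then needs the \emph{doubling} property to pass from $\mu\pare{B(y,\varrho(x)+d(x,y))}$ back to $\mu(B_x)$, incurring a factor $D_\mu^2$.

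Your route — replacing $L\,d(x,y)$ by $\widehat\omega_\varrho(d(x,y))$ and re-running the proof of \Cref{RES-RC-LIP} directly — never leaves the ball $B_x$ (or $B_y$) when applying annular decay, so no doubling step is required and the resulting constant $2^{1+\delta}D_\delta\leq 4D_\delta$ depends only on $D_\delta$, improving on the paper's $C=C(D_\delta,\mu)$. The only stylistic quibble is the word ``exactly'' when you discuss the degenerate configurations: what you actually prove (and all you need) is the one implication that $\varrho(y)-t\geq\varrho(x)$ forces $B_x\subset B_y$, so that the annular decay hypothesis $r\leq R$ is only invoked when it legitimately holds.
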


\begin{proof}
Since $\varrho$ is a continuous function by assumption, for each pair of points $x,y\in K$, we need distinguish two cases depending on the values of $\abs{\varrho(x)-\varrho(y)}$: if $\abs{\varrho(x)-\varrho(y)}\leq d(x,y)$, this case was already studied in \Cref{RES-RC-LIP} with $L=1$, then \eqref{eq-mod-cont-doubling} follows from \eqref{eq-mod-cont-delta} and \eqref{hat-rho-bounds}.

Otherwise, $\abs{\varrho(x)-\varrho(y)}>d(x,y)$. We can assume directly that
\begin{equation}\label{eq-assumption}
				d(x,y)<\varrho(x)-\varrho(y),
\end{equation}
since the other case is analogous. Then $B_y\subset B_x$ and
\begin{equation*}
				B_x\triangle B_y=B_x\setminus B_y\subset B(y,\varrho(x)+d(x,y))\setminus B(y,\varrho(y)).
\end{equation*}
Consequently, the $\delta$-annular decay \eqref{delta-AD} yields
\begin{equation}\label{eq-aux-01}
				\mu(B_x\triangle B_y)\leq D_\delta\pare{\frac{\varrho(x)-\varrho(y)+d(x,y)}{\varrho(x)+d(x,y)}}^\delta\mu(B(y,\varrho(x)+d(x,y))).
\end{equation}
On the other hand, since the $\delta$-annular decay property implies that $\mu$ is doubling with some constant $D_\mu\geq 1$, using the inclusion $B(y,\varrho(x)+d(x,y))\subset B(y,2\varrho(x))$, it turns out that
\begin{equation*}
				\mu(B(y,\varrho(x)+d(x,y)))\leq D_\mu^2\,\mu(B_x).
\end{equation*}
Therefore, replacing this in \eqref{eq-aux-01} we reach
\begin{equation*}
				\mu(B_x\triangle B_y)\leq D_\mu^2\,D_\delta\pare{\frac{\varrho(x)-\varrho(y)+d(x,y)}{\varrho(x)+d(x,y)}}^\delta\mu(B_x).
\end{equation*}
Since $d(x,y)\geq 0$, $\varrho(x)\geq\varrho_K$, $\mu(B_x)\geq\mu(B_y)$ and \eqref{eq-assumption},
\begin{equation*}
				\mu(B_x\triangle B_y)\leq D_\mu^2\,D_\delta\pare{2\,\frac{\varrho(x)-\varrho(y)}{\varrho_K}}^\delta\max\braces{\mu(B_x),\mu(B_y)}.
\end{equation*}
Recalling \eqref{hat-rho-bounds} the proof is completed.
\end{proof}

\begin{theorem}\label{RES-AD-CONT-W}
Let $(\X,d,\mu)$ be a proper metric measure space satisfying the $\delta$-annular decay property \eqref{delta-AD} for some $\delta\in(0,1]$ and $D_\delta\geq 1$. Suppose that $\varrho$ is a continuous admissible radius function in a bounded domain $\Omega\subset\X$. Then, for any $u\in L^\infty(\Omega)$, any compact set $K\subset\Omega$, any $x,y\in K$ and each $n\in\N$ we have
\begin{equation*}
				\abs{\M^nu(x)-\M^nu(y)}\leq\norm{u}_\infty\,\W_{\mu,K}(d(x,y)),
\end{equation*}
where $\W_{\mu,K}:[0,\diam\Omega]\rightarrow\R$ is given by
\begin{equation}\label{W-AD-CONT}
				\W_{\mu,K}(t)=C\varrho_K^{-\delta}\,\pare{\widehat{\omega}_\varrho(t)}^\delta,
\end{equation}
and $C=C(D_\delta, \mu) > 0$. In particular, the sequence $\braces{\M^n u}_n$ is locally uniformly equicontinuous in $\Omega$.
\end{theorem}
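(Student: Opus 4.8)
The plan is to assemble the theorem from the two preceding lemmas plus Corollary \ref{RES-M-EST}, observing that the crucial estimate is completely independent of $u$ and of the iteration index $n$. First I would fix a compact set $K \subset \Omega$ and points $x, y \in K$. Combining the two cases treated in the previous lemmas — namely $\abs{\varrho(x) - \varrho(y)} \leq d(x,y)$, handled by \Cref{RES-RC-LIP} with $L = 1$ (using the remark that only the pointwise inequality is needed there, which holds automatically in this case), and $\abs{\varrho(x) - \varrho(y)} > d(x,y)$, handled by the second lemma — one obtains in either situation a bound of the form
\begin{equation*}
\frac{\mu(B_x \triangle B_y)}{\max\braces{\mu(B_x), \mu(B_y)}} \leq C\,\varrho_K^{-\delta}\,\pare{\widehat{\omega}_\varrho(d(x,y))}^\delta,
\end{equation*}
with $C = C(D_\delta, \mu) > 0$ (one takes the larger of the two constants, absorbing the factor $4 D_\delta$ from \eqref{eq-mod-cont-delta} together with the $\widehat{\omega}_\varrho(t) \geq t$ part of \eqref{hat-rho-bounds} in the first case, and $D_\mu^2 D_\delta 2^\delta$ in the second, and recalling that $D_\mu$ is controlled by $D_\delta$). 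This is precisely $\tfrac12 \W_{\mu,K}(d(x,y))$ for $\W_{\mu,K}$ as in \eqref{W-AD-CONT}, so \eqref{def-mod-WK-0} holds.

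Next I would invoke Corollary \ref{RES-M-EST}, which gives, for every $u \in L^\infty(\Omega)$, every $n \in \N$ and all $x, y \in \Omega$,
\begin{equation*}
\abs{\M^n u(x) - \M^n u(y)} \leq 2\norm{u}_\infty \frac{\mu(B_x \triangle B_y)}{\max\braces{\mu(B_x), \mu(B_y)}}.
\end{equation*}
Restricting to $x, y \in K$ and substituting the displayed bound yields $\abs{\M^n u(x) - \M^n u(y)} \leq \norm{u}_\infty \W_{\mu,K}(d(x,y))$, which is the asserted estimate. Note that the right-hand side does not depend on $n$; this is exactly what makes the family $\braces{\M^n u}_n$ equicontinuous rather than merely each $\M^n u$ continuous. (One should also check that $\W_{\mu,K}$ is a genuine modulus of continuity on $[0, \diam\Omega]$: it is continuous, non-decreasing, and vanishes at $0$ because $\widehat{\omega}_\varrho$ does; concavity is not needed for the statement, but $\widehat{\omega}_\varrho$ was arranged to be concave.)

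For the final sentence, local uniform equicontinuity in $\Omega$ follows immediately: any point of $\Omega$ has a compact neighbourhood $K \subset \Omega$, on which the single modulus $\W_{\mu,K}$ controls $\abs{\M^n u(x) - \M^n u(y)}$ uniformly over all $n$, so the family is equicontinuous on $K$. I do not anticipate a serious obstacle here — the work has all been done in the two lemmas — beyond the mild bookkeeping of merging the two case-dependent constants into a single $C = C(D_\delta, \mu)$ and confirming that the constant from the doubling-only lemma can indeed be written in terms of $D_\delta$ alone (via the implication, recalled in the excerpt, that $\delta$-annular decay forces $\mu$ to be doubling with a constant depending only on $D_\delta$).
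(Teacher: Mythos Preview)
Your proposal is correct and follows exactly the route the paper intends: the theorem is stated without a separate proof block precisely because it is the direct combination of the two preceding lemmas (the case split on $\abs{\varrho(x)-\varrho(y)}\lessgtr d(x,y)$) with Corollary~\ref{RES-M-EST}, and you have reproduced that assembly accurately, including the observation that the bound is independent of $n$.
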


\begin{cor}
Let $(\X,d,\mu)$ be a proper metric measure space satisfying the $\delta$-annular decay property \eqref{delta-AD} for some $\delta\in(0,1]$ and $D_\delta\geq 1$. Suppose that $\varrho$ is a $\gamma$-H\"older continuous admissible radius function in a bounded domain $\Omega\subset\X$, for some $\gamma\in(0,1)$.  Then,

\begin{enumerate}[label=\roman*)]

\item for any $u\in L^\infty(\Omega)$, any compact set $K\subset\Omega$, any $x,y\in K$ and each $n\in\N$ we have
\begin{equation*}
				\abs{\M^nu(x)-\M^nu(y)}\leq\norm{u}_\infty\,\W_{\mu,K}(d(x,y)),
\end{equation*}
where $\W_{\mu,K}:[0,\diam\Omega]\rightarrow\R$ is given by
\begin{equation}\label{W-AD-HOLDER}
				\W_{\mu,K}(t)=C\,\varrho_K^{-\delta}\,t^{\,\gamma\,\delta},
\end{equation}
and $C=C(D_\delta,D_\mu,L)$ where  $L>0$ is the H\"older coefficient of $\varrho$. In particular, the sequence $\braces{\M^n u}_n$ is locally uniformly equicontinuous in $\Omega$.

\item the operator $\M$ sends $L^{\infty}(\Omega )$ to the space $\Lambda_{\,\gamma\delta,\mathrm{loc}}(\Omega)$ of locally $\gamma \delta$-H\"{o}lder continuous functions in $\Omega$, that is  
\begin{equation*}
				\M: L^\infty(\Omega)\rightarrow \Lambda_{\,\gamma\delta,\mathrm{loc}}(\Omega).
\end{equation*}


\end{enumerate}

\end{cor}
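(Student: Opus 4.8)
The statement is a direct specialization of \Cref{RES-AD-CONT-W}: once the modulus $\widehat\omega_\varrho$ appearing in \eqref{W-AD-CONT} is identified in the H\"older case, both items follow. So the plan is first to compute $\widehat\omega_\varrho$, then to invoke \Cref{RES-AD-CONT-W} for (i) and to localize for (ii).

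\emph{Plan for (i).} Since $\varrho$ is $\gamma$-H\"older continuous in $\Omega$ with coefficient $L$ and $\gamma\leq 1$, the function $t\mapsto Lt^\gamma$ is concave, so we may take as concave modulus of continuity for $\varrho$ in $\Omega$ the function $\omega_{\varrho,\Omega}(t)=\min\{Lt^\gamma,\ell(\Omega)\}$ (recall $0\leq\varrho\leq\ell(\Omega)$, and the minimum of two concave functions is concave), which moreover satisfies $\omega_{\varrho,\Omega}(\diam\Omega)\leq\ell(\Omega)\leq\tfrac12\diam\Omega\leq\diam\Omega$, the normalization required before \eqref{def-hat-rho}. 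Because $\gamma<1$, one has $\omega_{\varrho,\Omega}(t)>t$ for $t$ small (note $\varrho$ is nonconstant, being positive in $\Omega$ and zero on $\partial\Omega$), so the first alternative in the definition of $\widehat\omega_\varrho$ does not occur and \eqref{def-hat-rho} applies; a short computation distinguishing whether $L(\diam\Omega)^\gamma\lessgtr\ell(\Omega)$ shows in both cases that $\widehat\omega_\varrho(t)\leq c\,t^\gamma$ on $[0,\diam\Omega]$, where $c$ depends only on $L$, $\diam\Omega$ and $\ell(\Omega)$ (when $L(\diam\Omega)^\gamma\leq\ell(\Omega)$ one gets exactly $\widehat\omega_\varrho(t)=(\diam\Omega)^{1-\gamma}t^\gamma$). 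Substituting $(\widehat\omega_\varrho(t))^\delta\leq c^\delta t^{\gamma\delta}$ into \eqref{W-AD-CONT} turns $\W_{\mu,K}(t)=C\varrho_K^{-\delta}(\widehat\omega_\varrho(t))^\delta$ into $\W_{\mu,K}(t)\leq C'\varrho_K^{-\delta}t^{\gamma\delta}$, and absorbing $c^\delta$ (a fixed constant, since $\Omega$ is fixed) into the constant gives precisely \eqref{W-AD-HOLDER} with $C=C(D_\delta,D_\mu,L)$. \Cref{RES-AD-CONT-W} then yields the stated bound on $\abs{\M^nu(x)-\M^nu(y)}$ for every $n\in\N$, and local uniform equicontinuity of $\{\M^n u\}_n$ follows since $\varrho_K>0$ on each compact $K\subset\Omega$.

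\emph{Plan for (ii).} Fix $x_0\in\Omega$ and pick $r>0$ with $\overline B(x_0,r)\subset\Omega$; set $K=\overline B(x_0,r)$. By \Cref{RES-M-CONT} — applicable because the $\delta$-annular decay property implies that $\mu$ is ring-continuous — we have $\M u\in C(\Omega)$. Applying item (i) with $n=1$ gives $\abs{\M u(x)-\M u(y)}\leq\norm{u}_\infty C\varrho_K^{-\delta}d(x,y)^{\gamma\delta}$ for all $x,y\in K$, i.e. $\M u$ is $\gamma\delta$-H\"older on $K$. Since $x_0\in\Omega$ was arbitrary, $\M u\in\Lambda_{\gamma\delta,\mathrm{loc}}(\Omega)$.

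\emph{Main obstacle.} There is no substantial difficulty; the only point needing care is the bookkeeping in the second branch of \eqref{def-hat-rho}, namely choosing a normalized concave modulus comparable to $t^\gamma$ so that the clean power $t^{\gamma\delta}$ emerges with the claimed dependence $C=C(D_\delta,D_\mu,L)$, and citing \Cref{RES-M-CONT} to ensure $\M u$ is genuinely continuous so that ``locally $\gamma\delta$-H\"older'' is meaningful pointwise rather than merely almost everywhere.
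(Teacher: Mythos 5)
Your proposal is correct and follows essentially the same route the paper intends: specialize \Cref{RES-AD-CONT-W} by taking the concave modulus $\omega_{\varrho,\Omega}(t)\simeq Lt^{\gamma}$, check that the second branch of \eqref{def-hat-rho} gives $\widehat\omega_\varrho(t)\leq c\,t^{\gamma}$, substitute into \eqref{W-AD-CONT}, and obtain (ii) from the case $n=1$. The only caveat, which you already flag, is that the absorbed constant $c^{\delta}$ carries a dependence on $\diam\Omega$ and $\ell(\Omega)$ (unavoidable, since $t^{\delta}\leq c\,t^{\gamma\delta}$ forces it), so the stated dependence $C=C(D_\delta,D_\mu,L)$ should be read as allowing dependence on the fixed domain $\Omega$ as well -- an imprecision of the statement itself, not of your argument.
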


\begin{cor}\label{M regularity}
Let $(\X,d,\mu)$ be a prper metric measure space satisfying the $\delta$-annular decay property for some $\delta\in(0,1]$. Suppose that there is $\gamma \in (0,1]$ such that $\varrho$ is a $\gamma$-H\"{o}lder continuous admissible radius function in a bounded domain $\Omega\subset\X$. Then any $u\in L^{\infty} (\Omega )$ satisfying the $0$-mean value property in $\Omega$ with respect to the radius admissible function $\varrho$ (that is, $\M  u = u$) is locally $ \gamma \delta $-H\"{o}lder continuous in $\Omega$. In particular, if  $\delta = \gamma = 1$ then $u$ is locally Lipschitz continuous in $\Omega$. 

\end{cor}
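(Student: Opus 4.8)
The plan is to read off \Cref{M regularity} from the $\M$-estimates already established in this section, splitting into the two ranges $\gamma\in(0,1)$ and $\gamma=1$. In either case I would first note that $\M u$ is continuous in $\Omega$: since the $\delta$-annular decay property implies ring-continuity, \Cref{RES-M-CONT} applies and $\M\colon L^\infty(\Omega)\to C(\Omega)$. Hence $u$, which coincides with $\M u$, has a continuous representative, and it suffices to control its oscillation on an arbitrary compact set $K\subset\Omega$.

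If $\gamma\in(0,1)$, the conclusion is immediate from part (ii) of the corollary preceding \Cref{M regularity}, which asserts $\M\colon L^\infty(\Omega)\to\Lambda_{\gamma\delta,\mathrm{loc}}(\Omega)$: since $u=\M u$, we get $u\in\Lambda_{\gamma\delta,\mathrm{loc}}(\Omega)$, i.e. $u$ is locally $\gamma\delta$-H\"older continuous in $\Omega$. If instead $\gamma=1$, then $\varrho$ is an $L$-Lipschitz admissible radius function for some $L\geq 1$, and fixing a compact $K\subset\Omega$ and combining \eqref{M-est} (with $n=1$) with \Cref{RES-RC-LIP} gives, for all $x,y\in K$,
\[
\abs{u(x)-u(y)}=\abs{\M u(x)-\M u(y)}\leq 2\norm{u}_\infty\,\frac{\mu(B_x\triangle B_y)}{\max\braces{\mu(B_x),\mu(B_y)}}\leq 8\,L\,D_\delta\,\norm{u}_\infty\pare{\frac{d(x,y)}{\varrho_K}}^\delta .
\]
As $K$ is arbitrary, $u$ is locally $\delta$-H\"older continuous in $\Omega$, which is the claim since $\gamma\delta=\delta$; and when $\delta=\gamma=1$ the exponent equals $1$, i.e. $u$ is locally Lipschitz continuous in $\Omega$.

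I do not expect a genuine obstacle: \Cref{M regularity} is essentially a reformulation, in the special case $\M u=u$, of the regularity estimates for $\M$ proved above, the only point worth stating explicitly being that both ranges of $\gamma$ are already covered — $\gamma\in(0,1)$ by the preceding corollary and the endpoint $\gamma=1$ by \Cref{RES-RC-LIP}. Alternatively, one could handle all $\gamma\in(0,1]$ at once through \Cref{RES-AD-CONT-W}, after observing that the admissible choice $\omega_{\varrho,\Omega}(t)=Lt^\gamma$ (concave since $\gamma\leq 1$) yields, via either branch of \eqref{def-hat-rho}, the bound $\widehat\omega_\varrho(t)\leq(\diam\Omega)^{1-\gamma}\,t^\gamma$ on $[0,\diam\Omega]$, so that $\W_{\mu,K}(t)\leq C\,(\diam\Omega)^{(1-\gamma)\delta}\,\varrho_K^{-\delta}\,t^{\gamma\delta}$; the mildly delicate step in that route is precisely this uniform estimate on $\widehat\omega_\varrho$ when $Lt^\gamma$ is not dominated by $t$ near the origin, which is absorbed by the normalization in \eqref{def-hat-rho}.
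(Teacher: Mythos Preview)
Your proposal is correct and aligns with the paper's treatment: the paper gives no explicit proof of \Cref{M regularity}, presenting it as an immediate consequence of the preceding $\M$-estimates, and your argument makes that deduction explicit, correctly observing that the stated corollary right before it only covers $\gamma\in(0,1)$ and supplying \Cref{RES-RC-LIP} for the endpoint $\gamma=1$. Your alternative unified route through \Cref{RES-AD-CONT-W} is also sound.
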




\subsection{Estimates for $\S$}

The following lemma was proven in \cite{LEG-ARC} under the assumption that the admissible radius function is $1$-Lipschitz. Note that, since the operator $\S$ does not depend on any measure, we state it in the context of a metric space $(\X,d$).


\begin{lemma}\label{LEMMA-LGA}
Let $(\X,d)$ be a geodesic metric space and  let $\varrho$ be a continuous admissible radius function in a bounded domain $\Omega\subset\X$. Then, for any $u\in C(\overline{\Omega})$, any compact subset $K\subset\Omega$ and each $x$, $y\in K$ we have
\begin{equation*}
				\abs{\S u(x)-\S u(y)} \leq \omega_{u,\widetilde{K}}\pare{\widehat\omega_\varrho(d(x,y))}.
\end{equation*}
where $\widetilde{K}$, $\omega_{u,\widetilde{K}}$ and $\widehat\omega_\varrho$ are as in  \eqref{hull} , \eqref{def-mod-cont} and \eqref{def-hat-rho}. We have, in particular
\begin{equation}\label{S-est}
				\omega_{\S u,K}(t) \leq \omega_{u,\widetilde{K}}\pare{\widehat\omega_\varrho(t)}.
\end{equation}
\end{lemma}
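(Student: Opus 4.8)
The goal is to estimate $|\S u(x)-\S u(y)|$ for $x,y\in K$ in terms of the modulus of continuity of $u$ on the enlarged set $\widetilde K$ and the modulus $\widehat\omega_\varrho$ controlling how the radii $\varrho(x)$ vary. The natural strategy is to first reduce to a one-sided estimate. Since $\S u = \tfrac12(\sup_{B_\cdot}u + \inf_{B_\cdot}u)$, and $\sup_{B_x}u - \sup_{B_y}u$ and $\inf_{B_x}u - \inf_{B_y}u$ can each be bounded symmetrically, it suffices to bound $\sup_{B_x}u - \sup_{B_y}u$ (the infimum is handled by replacing $u$ with $-u$, and the roles of $x,y$ can be swapped). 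So the whole matter comes down to: given that the suprema over $B_x$ and $B_y$ are attained (here properness/compactness of $\overline B(x,\varrho(x))$ and continuity of $u$ are used — $B_x$ is closed and bounded, hence compact), say $\sup_{B_x}u = u(p)$ with $p\in B_x$, I want to produce a point $q\in B_y$ with $d(p,q)$ small, so that $u(p)-u(q)\le \omega_{u,\widetilde K}(d(p,q))$ and hence $\sup_{B_x}u - \sup_{B_y}u \le u(p)-u(q) \le \omega_{u,\widetilde K}(d(p,q))$.

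\textbf{The geodesic construction.} The point $q$ should be obtained by moving $p$ toward $y$ along a geodesic. Write $\varrho = \varrho(x)$, $\varrho' = \varrho(y)$, $s = d(x,y)$, and let $\gamma$ be a geodesic from $x$ to $p$, so $d(x,p)\le\varrho$. Since $\X$ is geodesic and proper, I can take a geodesic from $y$ to $p$ and, if $d(y,p)\le\varrho'$, simply set $q=p\in B_y$, getting the trivial bound. Otherwise $d(y,p)>\varrho'$; then I slide $p$ along the geodesic $[y,p]$ to the point $q$ at distance exactly $\varrho'$ from $y$, so $q\in B_y$ and $d(p,q) = d(y,p)-\varrho'$. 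Now $d(y,p)\le d(y,x)+d(x,p)\le s+\varrho$, so
\[
d(p,q) \le s + \varrho - \varrho' \le s + |\varrho(x)-\varrho(y)| .
\]
Using that $\widehat\omega_\varrho$ is a modulus of continuity for $\varrho$ with $\widehat\omega_\varrho(t)\ge t$ (by \eqref{hat-rho-bounds}), we get $|\varrho(x)-\varrho(y)|\le\widehat\omega_\varrho(s)$ and $s\le\widehat\omega_\varrho(s)$, and by subadditivity/concavity of $\widehat\omega_\varrho$ the sum $s+|\varrho(x)-\varrho(y)|$ is controlled by $2\widehat\omega_\varrho(s)$ — but in fact a cleaner route is to note directly $d(p,q)\le\widehat\omega_\varrho(d(x,y))$ by splitting more carefully, or to absorb the factor $2$ into the modulus; I expect the intended bound $d(p,q)\le\widehat\omega_\varrho(d(x,y))$ comes from observing that the ``bad'' case $d(y,p)>\varrho'$ can only occur when $d(x,p)$ is close to $\varrho(x)$, which combined with the definition of $\widehat\omega_\varrho$ (designed so that $\widehat\omega_\varrho(t)\ge\max\{t,\omega_{\varrho,\Omega}(t)\}$) gives exactly what is needed. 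I must also check $q\in\widetilde K$: indeed $q\in B_y\subset\widetilde K$ since $y\in K$, and $p\in B_x\subset\widetilde K$, so $\omega_{u,\widetilde K}(d(p,q))$ is legitimate.

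\textbf{Conclusion and main obstacle.} Combining, $u(p)-u(q)\le\omega_{u,\widetilde K}(d(p,q))\le\omega_{u,\widetilde K}(\widehat\omega_\varrho(d(x,y)))$, so $\sup_{B_x}u-\sup_{B_y}u\le\omega_{u,\widetilde K}(\widehat\omega_\varrho(d(x,y)))$; the symmetric argument (swap $x,y$) and the analogous estimate for $\inf$ (apply to $-u$) give $|\S u(x)-\S u(y)|\le\omega_{u,\widetilde K}(\widehat\omega_\varrho(d(x,y)))$, which is the claim, and \eqref{S-est} follows since $\omega_{\S u,K}$ is by definition the least concave modulus dominating the left side. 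The main obstacle is the geodesic sliding step: making sure the point $q$ at distance $\varrho(y)$ from $y$ along $[y,p]$ actually lies in the domain where $\omega_{u,\widetilde K}$ applies, and getting the sharp bound $d(p,q)\le\widehat\omega_\varrho(d(x,y))$ rather than a version with an extra multiplicative constant — this is precisely why $\widehat\omega_\varrho$ was defined with the normalization in \eqref{def-hat-rho} and the property $\widehat\omega_\varrho(t)\ge t$, and why the geodesic hypothesis on $\X$ is essential (in a non-geodesic space one cannot interpolate between $p$ and a point of $B_y$ with controlled distance).
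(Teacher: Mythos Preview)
Your geodesic sliding construction is exactly the right tool, but the conclusion you draw from it is too strong: the inequality $d(p,q)\le\widehat\omega_\varrho(d(x,y))$ for a \emph{single} sliding is false in general. Take $\X=\R$, $x=0$, $y=1$, $\varrho(x)=2$, $\varrho(y)=1$ (so $\varrho$ is $1$-Lipschitz and $\widehat\omega_\varrho(t)=t$). If $p=-2\in B_x$ realizes $\sup_{B_x}u$, then the nearest point of $B_y=[0,2]$ is $q=0$ and $d(p,q)=2=2\,\widehat\omega_\varrho(d(x,y))$. So each of the four one-sided estimates $\sup_{B_x}u-\sup_{B_y}u$, etc., can only be bounded by $\omega_{u,\widetilde K}\big(d(x,y)+|\varrho(x)-\varrho(y)|\big)$, and the factor $2$ cannot be removed at this stage; your hope that ``the bad case only occurs when $d(x,p)$ is close to $\varrho(x)$'' does not help.

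The missing idea is that the sup term and the inf term carry \emph{opposite} signs of $\varrho(x)-\varrho(y)$, and the averaging in $\S$ cancels them. Concretely, writing $s=d(x,y)$ and $\Delta=\varrho(x)-\varrho(y)$, your sliding gives
\[
\sup_{B_x}u-\sup_{B_y}u\le\omega_{u,\widetilde K}\big(\max\{0,s+\Delta\}\big),\qquad
\inf_{B_x}u-\inf_{B_y}u\le\omega_{u,\widetilde K}\big(\max\{0,s-\Delta\}\big),
\]
the second coming from sliding the minimizer $q'\in B_y$ toward $x$. Now use the concavity of $\omega_{u,\widetilde K}$ \emph{before} estimating further:
\[
\S u(x)-\S u(y)\le\omega_{u,\widetilde K}\Big(\tfrac12\max\{0,s+\Delta\}+\tfrac12\max\{0,s-\Delta\}\Big),
\]
and this averaged distance is $\le\widehat\omega_\varrho(s)$: if $|\Delta|\le s$ both maxima are positive and the average is exactly $s\le\widehat\omega_\varrho(s)$; if $|\Delta|>s$ one maximum vanishes and the other is $s+|\Delta|<2|\Delta|\le 2\,\omega_{\varrho,\Omega}(s)\le 2\,\widehat\omega_\varrho(s)$, so half of it is $\le\widehat\omega_\varrho(s)$. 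The paper packages this same computation via the identity $\S u(x)-\S u(y)=\tfrac12\sup_{s\in B_x}\inf_{t\in B_y}(u(s)-u(t))+\tfrac12\sup_{t\in B_y}\inf_{s\in B_x}(u(s)-u(t))$ together with the Le~Gruyer--Archer bound $\sup_{s\in B_x}\inf_{t\in B_y}d(s,t)\le\max\{0,s+\varrho(y)-\varrho(x)\}$, which is precisely your sliding estimate rephrased; the key step is again applying concavity of $\omega_{u,\widetilde K}$ to pull the $\tfrac12$ inside before invoking \eqref{hat-rho-bounds}.
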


\begin{proof}
Recalling the definition of $\S u$, \eqref{def-S}, and the elementary formulas
\begin{equation*}
\begin{split}
				\sup_{i\in I}x_i-\sup_{j\in J}y_j &=\sup_{i\in I}\inf_{j\in J}(x_i-y_j), \\
				\inf_{i\in I}x_i-\inf_{j\in J}y_j &=\sup_{j\in J}\inf_{i\in I}(x_i-y_j),
\end{split}
\end{equation*}
we can write
\begin{equation} \label{Su}
				\S u(x)-\S u(y) = \frac{1}{2}\sup_{s\in B_x}\inf_{t\in B_y}(u(s)-u(t)) +\frac{1}{2}\sup_{t\in B_y}\inf_{s\in B_x}(u(s)-u(t)).
\end{equation}
 Note that it may happen that $B_x\not\subset K$ or $B_y\not\subset K$. However, by \eqref{hull}, the inclusion $B_x \cup B_y\subset\widetilde{K}$ holds. 
Then, 
\begin{equation*} 
				\sup_{s\in B_x}\inf_{t\in B_y}(u(s)-u(t))\leq\sup_{s\in B_x}\inf_{t\in B_y}\omega_{u,\widetilde{K}}(d(s,t))\leq\omega_{u,\widetilde{K}}\pare{\sup_{s\in B_x}\inf_{t\in B_y}d(s,t)}.
\end{equation*}
Replacing this term (the other term is analogous) in \eqref{Su} and using that $\omega_{u,\widetilde{K}}$ is concave, we get
\begin{equation*}
				\S u(x)-\S u(y) \leq \omega_{u,\widetilde{K}}\pare{\frac{1}{2}\sup_{s\in B_x}\inf_{t\in B_y}d(s,t)+\frac{1}{2}\sup_{t\in B_y}\inf_{s\in B_x}d(s,t)}.
\end{equation*}
Thus, we need to show that, for any $x,y\in\Omega$, 
\begin{equation}\label{claimLGA}
				\frac{1}{2}\sup_{s\in B_x}\inf_{t\in B_y} d(s,t)+\frac{1}{2}\sup_{t\in B_y}\inf_{s\in B_x} d(s,t)\leq\widehat\omega_\varrho(d(x,y)).
\end{equation}
From \cite[p.282]{LEG-ARC} we get:
\begin{equation} \label{LG est}
\begin{split}
\sup_{t\in B_y}\inf_{s\in B_x} d(s,t) & \leq \max \{d(x,y) + \varrho (x) - \varrho (y) , 0    \}  \\
\sup_{s\in B_x}\inf_{t\in B_y} d(s,t) & \leq \max \{d(x,y) + \varrho (y) - \varrho (x) , 0    \} 
\end{split}
\end{equation}
%
%
%
%
Finally, \eqref{claimLGA} follows from \eqref{LG est} and  \eqref{hat-rho-bounds}. Therefore, this together with \eqref{Su} finishes the proof.

\end{proof}


\section{Iteration of $\T_{\alpha}$}\label{SEC:ITERATION}

As a direct consequence of \Cref{RES-M-CONT} and \Cref{LEMMA-LGA} we have the following result.

\begin{prop}\label{RES-T-CONT}
Let $(\X,d,\mu)$ be a proper, geodesic metric measure space. Suppose that $\Omega\subset\X$ is a bounded domain and let $\varrho$ be a continuous admissible radius function in $\Omega$. If $\alpha \in \R$ then $\T_\alpha:C(\overline{\Omega})\rightarrow C(\overline{\Omega})$. 
\end{prop}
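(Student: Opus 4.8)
\textbf{Plan of proof for Proposition~\ref{RES-T-CONT}.}
The plan is to deduce the claim directly from the two continuity results already established for the building blocks $\M$ and $\S$, since $\T_\alpha u = \alpha\,\S u + (1-\alpha)\M u$ is just an affine combination of the two. First I would check that $\T_\alpha u$ is well defined and continuous on the interior $\Omega$: by \Cref{RES-M-CONT}, since $\X$ is proper (hence, being geodesic and — implicitly — carrying a measure that is at least doubling, or in any case we only need ring-continuity of $\mu$; but here we can invoke that a proper geodesic space with the standing measure assumptions is ring-continuous, or simply restrict to the hypotheses under which \Cref{RES-M-CONT} applies) we have $\M u \in C(\Omega)$ for $u \in C(\overline\Omega) \subset L^\infty_{\mathrm{loc}}$; and by \Cref{LEMMA-LGA}, for each compact $K \subset \Omega$ and $x,y \in K$,
\begin{equation*}
\abs{\S u(x) - \S u(y)} \leq \omega_{u,\widetilde K}\pare{\widehat\omega_\varrho(d(x,y))},
\end{equation*}
which tends to $0$ as $d(x,y)\to 0$ because $\widehat\omega_\varrho$ and $\omega_{u,\widetilde K}$ are moduli of continuity; hence $\S u \in C(\Omega)$. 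Therefore $\T_\alpha u \in C(\Omega)$ for every $\alpha \in \R$.

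The substantive point — and the step I expect to be the main obstacle — is continuity \emph{up to the boundary}, i.e.\ that $\T_\alpha u$ extends continuously to $\overline\Omega$ with $\T_\alpha u = u$ on $\partial\Omega$. The idea is that as $x \to x_0 \in \partial\Omega$, the admissible radius function satisfies $\varrho(x) \leq \dist(x,\partial\Omega) \to 0$, so the balls $B_x = \overline B(x,\varrho(x))$ shrink to the point $x_0$. Concretely, every $s \in B_x$ satisfies $d(s,x_0) \leq d(s,x) + d(x,x_0) \leq \varrho(x) + d(x,x_0) \leq 2\,d(x,x_0) \to 0$ (using $\varrho(x) \leq \dist(x,\partial\Omega) \leq d(x,x_0)$). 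Consequently $\widetilde{\{x\}} = B_x$ collapses to $\{x_0\}$, and by uniform continuity of $u$ on the compact set $\overline\Omega$ we get $\sup_{B_x} u \to u(x_0)$ and $\inf_{B_x} u \to u(x_0)$, whence $\S u(x) \to u(x_0)$; likewise $\M u(x) = \dashint_{B_x} u\,d\mu \to u(x_0)$ since $u$ is within $\osc_{B_x} u$ of $u(x_0)$ on all of $B_x$. Taking the affine combination, $\T_\alpha u(x) \to \alpha\,u(x_0) + (1-\alpha)u(x_0) = u(x_0)$, so setting $\T_\alpha u(x_0) := u(x_0)$ on $\partial\Omega$ gives a function that is continuous on $\overline\Omega$.

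Putting the two parts together: $\T_\alpha u$ is continuous on the open set $\Omega$ and has a continuous extension across each boundary point, and these agree, so $\T_\alpha u \in C(\overline\Omega)$. Since $u \in C(\overline\Omega)$ was arbitrary and $\alpha \in \R$ arbitrary, this establishes $\T_\alpha : C(\overline\Omega) \to C(\overline\Omega)$. The only delicate bookkeeping is making sure that the interior continuity statement \Cref{RES-M-CONT} is applied under a measure hypothesis that is actually in force here (ring-continuity), and that the boundary argument uses only $\varrho(x) \leq \dist(x,\partial\Omega)$ together with uniform continuity of $u$ on $\overline\Omega$ — both of which are guaranteed by the hypotheses. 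No further estimates are needed; everything reduces to the monotone estimates $\abs{\M v}_\infty \leq \abs{v}_\infty$, the two lemmas quoted, and the shrinking of $B_x$ near $\partial\Omega$.
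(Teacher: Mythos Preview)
Your approach is exactly the paper's: the proposition is stated there simply ``as a direct consequence of \Cref{RES-M-CONT} and \Cref{LEMMA-LGA}'' with no further argument, and you invoke precisely those two results for the interior continuity, then add the (necessary, but omitted in the paper) boundary argument using $\varrho(x)\leq\dist(x,\partial\Omega)\to 0$ and uniform continuity of $u$ on $\overline\Omega$. One caution on your hedging: proper $+$ geodesic does \emph{not} by itself force ring-continuity of $\mu$ (a measure with an atom already fails it), so your parenthetical justification is not valid; the ring-continuity needed to invoke \Cref{RES-M-CONT} is an implicit hypothesis that the paper's statement of the proposition also leaves unstated --- you are right to flag it, but don't claim it follows from the listed assumptions.
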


As in the case $\alpha=0$ in which $\T_\alpha$ reduces to $\M$, to go beyond this result we need to take into consideration stronger hypothesis on the measure $\mu$.

\begin{lemma}
Let $(\X,d,\mu)$ be a proper, geodesic metric measure space and let $\Omega \subset \X$ be a bounded domain. Suppose that  $\varrho$ is an admissible radius function in  $\Omega$ and assume that, for every compact set $K\subset\Omega$, a modulus of continuity $\W_{\mu,K}$ is given satisfying \eqref{def-mod-WK-1}. Then, if $|\alpha | \leq 1$,  and  $u\in C(\overline{\Omega})$, the estimate
\begin{equation}\label{T-est}
				\omega_{\T_\alpha u,K}(t) \leq\abs{\alpha}\omega_{u,\widetilde{K}}\pare{\widehat\omega_\varrho(t)} +(1-\alpha)\norm{u}_\infty\W_{\mu,K}(t),
\end{equation}
holds for all $t\in [0, \diam\Omega]$.
\end{lemma}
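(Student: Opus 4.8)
The plan is to estimate the oscillation of $\T_\alpha u$ directly from the definition $\T_\alpha u = \alpha\, \S u + (1-\alpha)\M u$, controlling each term by a modulus of continuity. First I would fix a compact set $K\subset\Omega$, take two points $x,y\in K$, and write
\begin{equation*}
\abs{\T_\alpha u(x)-\T_\alpha u(y)} \leq \abs{\alpha}\abs{\S u(x)-\S u(y)} + \abs{1-\alpha}\abs{\M u(x)-\M u(y)}.
\end{equation*}
Since $\abs{\alpha}\leq 1$ we have $1-\alpha\geq 0$, so $\abs{1-\alpha}=1-\alpha$, which explains the form of the right-hand side of \eqref{T-est}.

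For the first term, I would invoke \Cref{LEMMA-LGA} (which applies since $\X$ is geodesic and $\varrho$ is a continuous admissible radius function), obtaining
\begin{equation*}
\abs{\S u(x)-\S u(y)}\leq \omega_{u,\widetilde{K}}\pare{\widehat\omega_\varrho(d(x,y))}.
\end{equation*}
For the second term, the hypothesis is precisely that a modulus of continuity $\W_{\mu,K}$ satisfying \eqref{def-mod-WK-1} has been supplied, so for $n=1$ we get
\begin{equation*}
\abs{\M u(x)-\M u(y)}\leq \norm{u}_\infty\,\W_{\mu,K}(d(x,y)).
\end{equation*}
Combining the two bounds and using that $t\mapsto d(x,y)$ ranges over $[0,\diam\Omega]$ as $x,y$ vary in $K$ yields
\begin{equation*}
\abs{\T_\alpha u(x)-\T_\alpha u(y)} \leq \abs{\alpha}\,\omega_{u,\widetilde{K}}\pare{\widehat\omega_\varrho(d(x,y))} + (1-\alpha)\norm{u}_\infty\,\W_{\mu,K}(d(x,y)),
\end{equation*}
and since the right-hand side is a concave, non-decreasing function of $d(x,y)$ vanishing at $0$ (it is a non-negative combination of compositions and products of such functions — $\widehat\omega_\varrho$ is concave and $\omega_{u,\widetilde K}$ is concave non-decreasing, so their composition is concave non-decreasing), it serves as an admissible modulus of continuity for $\T_\alpha u$ on $K$. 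Taking the infimum over admissible moduli, or simply reading off the pointwise bound, gives \eqref{T-est} for all $t\in[0,\diam\Omega]$.

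There is essentially no obstacle here: the lemma is a bookkeeping assembly of \Cref{LEMMA-LGA} and the assumed estimate \eqref{def-mod-WK-1}. The only point requiring a word of care is the sign of $1-\alpha$ — the statement is tacitly using $\abs{\alpha}\leq 1$ to drop the absolute value around $1-\alpha$ — and the observation that the resulting upper bound is itself a legitimate modulus of continuity, which is immediate from concavity and monotonicity of the constituent functions. One should also note that $\T_\alpha u\in C(\overline\Omega)$ by \Cref{RES-T-CONT}, so that talking about $\omega_{\T_\alpha u,K}$ is justified.
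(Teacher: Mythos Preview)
Your proof is correct and follows exactly the paper's approach: split $\T_\alpha u$ into its $\S$ and $\M$ parts, apply \Cref{LEMMA-LGA} (equivalently \eqref{S-est}) to the former and the assumed bound \eqref{def-mod-WK-1} to the latter, and combine. Your additional remarks---that $\abs{\alpha}\leq 1$ is what turns $\abs{1-\alpha}$ into $1-\alpha$, and that the resulting right-hand side is itself an admissible (concave, non-decreasing, vanishing at $0$) modulus of continuity---are details the paper leaves implicit but are worth noting.
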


\begin{proof}
Let $x,y\in K$. Then, since $\T_\alpha=\alpha\S+(1-\alpha)\M$, we get
\begin{equation*}
				\abs{\T_\alpha u(x)-\T_\alpha u(y)} \leq \abs\alpha\abs{\S u(x)-\S u(y)}+(1-\alpha)\abs{\M u(x)-\M u(y)},
\end{equation*}
and \eqref{T-est} is obtained by taking into consideration the estimates \eqref{S-est} and \eqref{def-mod-WK-1}.
\end{proof}

\

The key point for this subsection is the iteration of formula \eqref{T-est}. Note that, in order to obtain estimates for $\T_{\alpha}u$ on the compact set $K$, we need to control  $u$ on $\widetilde K\supset K$, where $\widetilde K$ is given by \eqref{hull}. Thus, when iterating \eqref{T-est}, we need to guarantee some control on the sequence of sets given by successive application of the $\widetilde{(\cdot)}$ operation over the compact set $K$. For that reason, we need to assume that the domain $\Omega\subset\X$ is bounded and we impose the following restriction on $\varrho\,$:
\begin{equation}\label{radii-bounds}
				\lambda\dist(x,\partial\Omega)^\beta\leq\varrho(x)\leq\varepsilon\dist(x,\partial\Omega),
\end{equation}
for each $x\in\Omega$, where $0<\lambda\leq \ell(\Omega)^{1-\beta}\varepsilon$,  $0<\varepsilon<1$ and $\beta\geq 1$. 
We also introduce the following exhaustion of $\Omega$:
\begin{equation}\label{def-Omega}
				K_m :\,= \set{x\in\Omega}{\dist(x,\partial\Omega) \geq (1-\varepsilon)^m},
\end{equation}
for $m\in\N$, where $\varepsilon$ is the constant appearing in  \eqref{radii-bounds}. Hence, $K_1\subset K_2\subset\cdots\Subset\Omega$ and $\displaystyle\lim_{m\rightarrow\infty}K_m=\Omega$ in the sense that, for every $x\in\Omega$, there exists large enough $m_0=m_0(x)\in\N$ such that $x\in K_m$ for all $m\geq m_0$. Moreover, by \eqref{hull} and \eqref{radii-bounds}, it is easy to check that
\begin{equation}\label{inclusion-Omega-tilde}
				\widetilde{K_m}\subset K_{m+1},
\end{equation}
for $m\in\N$. 
%
%
From \eqref{radii-bounds}, we can also control from below the values of $\varrho$ on $K_m$:
\begin{equation} \label{varrho-Omega-m}
				\varrho_{K_m}\geq\lambda\pare{\inf_{K_m}\dist(x,\partial\Omega)}^\beta\geq\lambda(1-\varepsilon)^{m\beta},
\end{equation}
where $\varrho_{K_m}$ is as in \eqref{rho G}.
Replacing $K$ by $K_m$ in \eqref{T-est} and iterating it we can control the oscillation of  $\T_\alpha^n$, for $n\in\N$, as the next lemma shows.  

\begin{lemma}
Let $(\X,d,\mu)$ be a proper, geodesic metric measure space, $\Omega\subset\X$ a bounded domain and let $\varrho$ be a continuous admissible radius function in $\Omega$. Suppose that, for every compact sect $K\subset\Omega$, a modulus of continuity $\W_{\mu,K}$ is given satisfying \eqref{def-mod-WK-1}. Then, for $|\alpha | \leq 1$ and $u\in C(\overline{\Omega})$, the estimate
\begin{multline}\label{Tk-modcont}
				\omega_{\T^n_\alpha u,K_m}(t)  \leq \\  \abs{\alpha}^n\omega_{u,K_{m+n}}\pare{\widehat\omega_\varrho^{(n)}(t)}+ (1-\alpha)\norm{u}_\infty\sum_{j=0}^{n-1}\abs{\alpha}^j\W_{\mu,K_{m+j}}\pare{\widehat\omega_\varrho^{(j)}(t)}
\end{multline}
holds for each $n$, $m\in \N$ and  every $t\in [0, \diam \Omega]$. 
\end{lemma}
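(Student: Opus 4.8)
The plan is to prove the estimate \eqref{Tk-modcont} by induction on $n$, using the single-step estimate \eqref{T-est} together with the nesting property \eqref{inclusion-Omega-tilde} of the exhausting sets $K_m$. The base case $n=1$ is precisely \eqref{T-est} with $K$ replaced by $K_m$: since $\widetilde{K_m}\subset K_{m+1}$ by \eqref{inclusion-Omega-tilde}, a modulus of continuity for $u$ on $K_{m+1}$ is also one on $\widetilde{K_m}$, so
\begin{equation*}
\omega_{\T_\alpha u,K_m}(t)\leq\abs{\alpha}\,\omega_{u,K_{m+1}}\bigl(\widehat\omega_\varrho(t)\bigr)+(1-\alpha)\norm{u}_\infty\,\W_{\mu,K_m}(t),
\end{equation*}
which is \eqref{Tk-modcont} for $n=1$ (recall $\widehat\omega_\varrho^{(0)}(t)=t$ and $\widehat\omega_\varrho^{(1)}=\widehat\omega_\varrho$).

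For the inductive step, suppose \eqref{Tk-modcont} holds for $n$ and all $m$. Writing $\T_\alpha^{n+1}u=\T_\alpha(\T_\alpha^n u)$ and applying \eqref{T-est} to the function $v=\T_\alpha^n u\in C(\overline\Omega)$ on the compact set $K_m$, we get
\begin{equation*}
\omega_{\T_\alpha^{n+1}u,K_m}(t)\leq\abs{\alpha}\,\omega_{v,K_{m+1}}\bigl(\widehat\omega_\varrho(t)\bigr)+(1-\alpha)\norm{v}_\infty\,\W_{\mu,K_m}(t),
\end{equation*}
again using $\widetilde{K_m}\subset K_{m+1}$ to replace $\omega_{v,\widetilde{K_m}}$ by $\omega_{v,K_{m+1}}$. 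Now apply the induction hypothesis to $\omega_{v,K_{m+1}}=\omega_{\T_\alpha^n u,K_{m+1}}$, evaluated at the argument $\widehat\omega_\varrho(t)$:
\begin{equation*}
\omega_{\T_\alpha^n u,K_{m+1}}\bigl(\widehat\omega_\varrho(t)\bigr)\leq\abs{\alpha}^n\omega_{u,K_{m+1+n}}\bigl(\widehat\omega_\varrho^{(n+1)}(t)\bigr)+(1-\alpha)\norm{u}_\infty\sum_{j=0}^{n-1}\abs{\alpha}^j\W_{\mu,K_{m+1+j}}\bigl(\widehat\omega_\varrho^{(j+1)}(t)\bigr),
\end{equation*}
where I used $\widehat\omega_\varrho^{(n)}\bigl(\widehat\omega_\varrho(t)\bigr)=\widehat\omega_\varrho^{(n+1)}(t)$. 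Multiplying by $\abs{\alpha}$, reindexing the sum via $i=j+1$ so that it runs from $i=1$ to $n$, and adding the leftover term $(1-\alpha)\norm{\T_\alpha^n u}_\infty\W_{\mu,K_m}(t)$, which by the contraction property $\norm{\T_\alpha^n u}_\infty\leq\norm{u}_\infty$ (valid since $\abs{\alpha}\leq1$, as $\S$ and $\M$ are both averaging-type operators bounded by $\norm{\cdot}_\infty$) is at most $(1-\alpha)\norm{u}_\infty\W_{\mu,K_m}(t)$ — the $i=0$ term — one obtains exactly
\begin{equation*}
\omega_{\T_\alpha^{n+1}u,K_m}(t)\leq\abs{\alpha}^{n+1}\omega_{u,K_{m+n+1}}\bigl(\widehat\omega_\varrho^{(n+1)}(t)\bigr)+(1-\alpha)\norm{u}_\infty\sum_{j=0}^{n}\abs{\alpha}^j\W_{\mu,K_{m+j}}\bigl(\widehat\omega_\varrho^{(j)}(t)\bigr),
\end{equation*}
which is \eqref{Tk-modcont} with $n$ replaced by $n+1$.

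The only genuinely delicate points are bookkeeping rather than conceptual: one must be careful that the shift $m\mapsto m+1$ in the set-index is consistently carried through both terms when \eqref{T-est} is applied at level $K_m$, and that the composition identity $\widehat\omega_\varrho^{(j)}\circ\widehat\omega_\varrho=\widehat\omega_\varrho^{(j+1)}$ is used so that after reindexing the arguments match $\widehat\omega_\varrho^{(j)}(t)$ with the correct index. It is also worth noting that everything takes place inside $\Omega$ because $\widetilde{K_m}\subset K_{m+1}\Subset\Omega$, so no values of $u$ outside $\overline\Omega$ are ever invoked, and the hypothesis \eqref{radii-bounds} enters only through \eqref{inclusion-Omega-tilde} (and will matter later, via \eqref{varrho-Omega-m}, when the moduli $\W_{\mu,K_m}$ are made explicit). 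I expect no real obstacle; the main care is simply to keep the double indexing $(n,m)$ and the iterated modulus $\widehat\omega_\varrho^{(n)}$ aligned throughout.
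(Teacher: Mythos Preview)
Your proof is essentially the same as the paper's: both amount to iterating the single-step estimate \eqref{T-est} after replacing $\omega_{u,\widetilde{K_m}}$ by $\omega_{u,K_{m+1}}$ via the inclusion \eqref{inclusion-Omega-tilde}, and you have simply written out the induction that the paper compresses into the phrase ``iteration of this inequality.''

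One small caution: your justification of $\norm{\T_\alpha^n u}_\infty\le\norm{u}_\infty$ as ``valid since $\abs{\alpha}\le1$'' is not quite right. For $0\le\alpha\le1$ the operator $\T_\alpha=\alpha\S+(1-\alpha)\M$ is a genuine convex combination of two operators each bounded by $\norm{\cdot}_\infty$, so the contraction holds. For $-1\le\alpha<0$, however, the coefficients $\abs{\alpha}$ and $1-\alpha$ sum to $1+2\abs{\alpha}>1$, and in general $\norm{\T_\alpha u}_\infty$ can exceed $\norm{u}_\infty$. The paper's own proof glosses over this same point, so you are not missing anything relative to it; but if you want the inductive step to go through verbatim for negative $\alpha$ you would need either to restrict to $\alpha\ge0$ or to carry a growing constant $(1+2\abs{\alpha})^j$ in the $\W$-terms rather than the clean $\norm{u}_\infty$.
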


\begin{proof}
Since $\widetilde{K_m} \subset K_{m+1}$, we get from \eqref{T-est} 
\begin{equation*}
				\omega_{\T_\alpha u,K_m}(t) \leq\abs{\alpha}\omega_{u,K_{m+1}}\pare{\widehat\omega_\varrho(t)} +(1-\alpha)\norm{u}_\infty\W_{\mu,K_{m+1}}(t)
\end{equation*}
for each $t\in [0, \diam\Omega]$. Now, iteration of this inequality gives \eqref{Tk-modcont}.
\end{proof}

To get equicontinuity of the sequence $\displaystyle \braces{ \T_{\alpha}^n u }_n $,  we need to add some extra condition.

\begin{lemma}\label{RES-LEMMA-T-LOC-UNI-EQC}
Let $(\X,d,\mu)$ be a proper, geodesic metric measure space with a continuous admissible radius function $\varrho$ in a bounded domain $\Omega\subset\X$. Suppose that, for every compact sect $K\subset\Omega$, a modulus of continuity $\W_{\mu,K}$ is given satisfying \eqref{def-mod-WK-1}. 
Assume also that 
\begin{equation}\label{alpha-cond}
				\abs\alpha \limsup_{j\rightarrow\infty}\pare{\W_{\mu,K_j}\pare{\diam\Omega}}^{1/j} < 1 
\end{equation}
Then for any $u\in C(\overline{\Omega})$, the sequence $\braces{\T^n_\alpha u}_n$ is locally uniformly equicontinuous in $\Omega$.
\end{lemma}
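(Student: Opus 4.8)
The plan is to exploit the iterated estimate \eqref{Tk-modcont} together with the extra hypothesis \eqref{alpha-cond}, showing that the first term on the right-hand side of \eqref{Tk-modcont} tends to $0$ as $n\to\infty$ uniformly in a neighborhood of any fixed point, while the series in the second term is dominated by a convergent geometric-type series whose tail is small. Fix a point $x_0\in\Omega$; then $x_0\in K_{m_0}$ for some $m_0$, and it suffices to produce, for each $\eta>0$, a $\theta>0$ such that $\omega_{\T_\alpha^n u,K_{m_0}}(t)\le\eta$ for all $n$ whenever $t\le\theta$. Since $\widehat\omega_\varrho$ maps $[0,\diam\Omega]$ into $[0,\diam\Omega]$, every composition $\widehat\omega_\varrho^{(j)}(t)$ is bounded by $\diam\Omega$, so we may first estimate the ``tails'' crudely and then use continuity of the finitely many remaining terms at $t=0$.

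First I would bound the first term: $\abs{\alpha}^n\,\omega_{u,K_{m_0+n}}\big(\widehat\omega_\varrho^{(n)}(t)\big)\le\abs{\alpha}^n\,\omega_{u,\Omega}(\diam\Omega)$ if $\abs\alpha\le 1$ — wait, one must be careful here because $\omega_{u,K_{m_0+n}}$ is a modulus of continuity on the growing set $K_{m_0+n}$, so I would instead use that $u$ is bounded on $\overline\Omega$ to write $\omega_{u,K_{m_0+n}}(s)\le 2\norm{u}_\infty$ for all $s$; hence this term is at most $\abs\alpha^n\cdot 2\norm{u}_\infty\to 0$. (If $\abs\alpha=1$ this is where the argument would fail, consistent with the remark that the methods do not reach $\alpha=1$.) For the series, set $a_j:=\W_{\mu,K_{m_0+j}}(\diam\Omega)$; by \eqref{alpha-cond} there is $q<1$ and $j_1$ with $\abs\alpha\,a_j^{1/j}\le q$, hence $\abs\alpha^j a_j\le q^j$ for $j\ge j_1$, so $\sum_{j\ge j_1}\abs\alpha^j\W_{\mu,K_{m_0+j}}\big(\widehat\omega_\varrho^{(j)}(t)\big)\le\sum_{j\ge j_1}\abs\alpha^j a_j\le\sum_{j\ge j_1}q^j$, which can be made $<\eta/3$ by choosing $j_1$ large, uniformly in $t$ and $n$.

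Next I would treat the finitely many remaining terms $j=0,\dots,j_1-1$: each $\W_{\mu,K_{m_0+j}}$ is a modulus of continuity, hence continuous with $\W_{\mu,K_{m_0+j}}(0)=0$, and $t\mapsto\widehat\omega_\varrho^{(j)}(t)$ is continuous with value $0$ at $t=0$; therefore the finite sum $(1-\alpha)\norm{u}_\infty\sum_{j=0}^{j_1-1}\abs\alpha^j\W_{\mu,K_{m_0+j}}\big(\widehat\omega_\varrho^{(j)}(t)\big)\to 0$ as $t\to 0^+$, so it is $<\eta/3$ for $t\le\theta_1$. Choosing in addition $n$ large is not needed for the series, only for the first term, so I would split: for $n\ge N$ with $2\norm{u}_\infty\abs\alpha^N<\eta/3$ the first term is controlled for all $t$; for the finitely many $n<N$ the bound \eqref{Tk-modcont} still applies with the same $\widehat\omega_\varrho^{(n)}(t)\to 0$, and since there are only finitely many such $n$ one shrinks $\theta$ further. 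Combining, for $t\le\theta:=\min\{\theta_1,\dots\}$ one gets $\omega_{\T_\alpha^n u,K_{m_0}}(t)<\eta$ for every $n\in\N$, which is exactly local uniform equicontinuity on $K_{m_0}$, and hence on $\Omega$ since the $K_m$ exhaust $\Omega$.

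The main obstacle I anticipate is bookkeeping rather than depth: one must keep the index shifts $K_{m+j}$ straight (the constant $C$ in $\W_{\mu,K_{m+j}}$ hides a factor $\varrho_{K_{m+j}}^{-\delta}$, which by \eqref{varrho-Omega-m} grows like $(1-\varepsilon)^{-(m+j)\beta\delta}$, so $a_j^{1/j}$ really does have a finite $\limsup$ governed by $(1-\varepsilon)^{-\beta\delta}$), and verify that \eqref{alpha-cond} is precisely the condition that makes $\abs\alpha^j a_j$ geometrically summable despite this growth. Once that is in hand, the separation into ``large $j$'', ``large $n$'', and ``small $t$'' pieces is routine, and no smoothness of $\mu$ beyond what already yields the moduli $\W_{\mu,K}$ is required.
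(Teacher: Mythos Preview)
Your proposal is correct and follows essentially the same route as the paper: both arguments start from the iterated estimate \eqref{Tk-modcont}, kill the first term via $\abs{\alpha}^n\to 0$ (the paper uses the bound $\omega_{u,\Omega}(\diam\Omega)$ rather than $2\norm{u}_\infty$, but either works since $u\in C(\overline\Omega)$), and control the series by the root test through \eqref{alpha-cond} after the index shift $K_j\rightsquigarrow K_{m+j}$. The only cosmetic difference is that the paper packages the two pieces as common moduli $\mathcal F_1,\mathcal F_2$ instead of your explicit tail/head and large-$n$/small-$n$ splitting, which amounts to the same thing.
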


\begin{proof}
Fix $m \in \N$. Regarding the first term in the right-hand side of \eqref{Tk-modcont} we note that, since $\widehat\omega_\varrho(t)\leq \diam\Omega$ for each $t\in [0, \diam\Omega ]$, then
\begin{equation*}
				\abs{\alpha}^n\omega_{u,K_{m+n}}\pare{\widehat\omega^{(n)}(t)}\leq \abs{\alpha}^n\omega_{u,\Omega}\pare{\diam\Omega}\xrightarrow[\ n\rightarrow\infty\ ]{} 0.
\end{equation*}
Thus, 
\begin{equation}\label{conv unif}
\braces{t\mapsto\abs{\alpha}^n\omega_{u,\Omega}\pare{\widehat\omega^{(n)}_\varrho(t)}}_n \xrightarrow[\ n\rightarrow\infty\ ]{} 0
\end{equation}
uniformly in $[0, \diam\Omega]$ as $n\rightarrow\infty$. Consequently there exists a common modulus of continuity $\mathcal{F}_1$ for the sequence \eqref{conv unif}. Now we focus on the series in \eqref{Tk-modcont}. Note that
\begin{equation*}
\W_{\mu,K_{m+j}}\pare{\widehat \omega_\varrho^{(j)}(t)} \leq \W_{\mu,K_{m+j}}\pare{\diam\Omega}
\end{equation*}
for all $t\in [0, \diam\Omega]$. Then, since
\begin{equation*}
\limsup_{j\rightarrow\infty}\pare{\W_{\mu,K_{m+j}}\pare{\diam\Omega}}^{1/j}= \limsup_{j\rightarrow\infty}\pare{\W_{\mu,K_{m+j}}\pare{\diam\Omega}}^{1/(m+j)},
\end{equation*}
it follows from \eqref{alpha-cond} that 
\begin{equation*}
|\alpha | \limsup_{j\rightarrow\infty}\pare{\W_{\mu,K_{m+j}}\pare{\diam\Omega}}^{1/j} < 1,
\end{equation*}
so the root test implies that the series 
\begin{equation*}
				\sum_{j=0}^\infty\abs{\alpha}^j\W_{\mu,K_{m+j}}\pare{\widehat\omega_\varrho^{(j)}(t)}<\infty
\end{equation*}
converges uniformly in $[0,\diam\Omega]$. In particular, there exists another modulus of continuity for the series, say $\mathcal{F}_2$. Summarizing:
\begin{equation*}
				\omega_{\T^n_\alpha u,K_{m}}(t) \\ \leq \mathcal{F}_1(t)+ (1-\alpha)\norm{u}_\infty\mathcal{F}_2(t).
\end{equation*}
Since $m$ is arbitrary and the right-hand side of the previous inequality does not depend on $n\in\N$, the proof is finished.
\end{proof}

\begin{theorem}\label{RES-EQUICONT}
Let $(\X,d,\mu)$ be a proper, geodesic metric measure space satisfying the $\delta$-annular decay property \eqref{delta-AD} for some $\delta\in(0,1]$.  Let $\abs\alpha<1$ and suppose that $\varrho$ is a continuous admissible radius function in a bounded domain $\Omega\subset\X$ satisfying \eqref{radii-bounds} with $0<\lambda\leq \ell(\Omega)^{1-\beta}\varepsilon$. Assume also that 
\begin{flalign}
				0  < \, & \, \varepsilon \, < 1-\abs{\alpha}, \label{epsilon-alpha} \vspace{0.2cm} \\ 1 \leq \, & \, \beta \, < \frac{\log\displaystyle\frac{1}{\abs{\alpha}}}{\, \log\displaystyle\frac{1}{1-\varepsilon}\, }. \label{alpha-beta-epsilon 3}
\end{flalign}
Then, for any $u\in C(\overline{\Omega})$, the sequence of iterates $\braces{\T^n_\alpha u}_n$ is locally uniformly equicontinuous in $\Omega$.
\end{theorem}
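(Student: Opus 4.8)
The plan is to deduce \Cref{RES-EQUICONT} directly from \Cref{RES-LEMMA-T-LOC-UNI-EQC}: it suffices to produce, for every compact $K\subset\Omega$, a modulus of continuity $\W_{\mu,K}$ satisfying \eqref{def-mod-WK-1}, and then to check that the smallness condition \eqref{alpha-cond} holds along the exhaustion $\{K_m\}$ defined in \eqref{def-Omega}. Once both are in place, that lemma yields the local uniform equicontinuity of $\{\T_\alpha^n u\}_n$ immediately.

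\textbf{Step 1 (choice of $\W_{\mu,K}$).} Since $(\X,d,\mu)$ is proper and satisfies the $\delta$-annular decay property and $\varrho$ is a continuous admissible radius function in the bounded domain $\Omega$, \Cref{RES-AD-CONT-W} applies and supplies the modulus $\W_{\mu,K}(t)=C\varrho_K^{-\delta}\pare{\widehat\omega_\varrho(t)}^\delta$ with $C=C(D_\delta,\mu)$, which satisfies \eqref{def-mod-WK-1}. Recalling from \eqref{hat-rho-bounds} that $\widehat\omega_\varrho(\diam\Omega)=\diam\Omega$, evaluation at $t=\diam\Omega$ gives $\W_{\mu,K}(\diam\Omega)=C\varrho_K^{-\delta}(\diam\Omega)^\delta$.

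\textbf{Step 2 (the root test).} Applying this with $K=K_j$ and using the lower bound $\varrho_{K_j}\geq\lambda(1-\varepsilon)^{j\beta}$ from \eqref{varrho-Omega-m} (valid thanks to \eqref{radii-bounds} with $0<\lambda\leq\ell(\Omega)^{1-\beta}\varepsilon$), we get $\W_{\mu,K_j}(\diam\Omega)\leq C\lambda^{-\delta}(\diam\Omega)^\delta(1-\varepsilon)^{-j\beta\delta}$. Taking $j$-th roots and letting $j\to\infty$, the factor $\pare{C\lambda^{-\delta}(\diam\Omega)^\delta}^{1/j}\to 1$, so $\limsup_{j\to\infty}\pare{\W_{\mu,K_j}(\diam\Omega)}^{1/j}\leq(1-\varepsilon)^{-\beta\delta}$, and hence \eqref{alpha-cond} holds as soon as $\abs\alpha<(1-\varepsilon)^{\beta\delta}$. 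Now \eqref{alpha-beta-epsilon 3} is equivalent to $\beta\log\frac{1}{1-\varepsilon}<\log\frac{1}{\abs\alpha}$, i.e. $\abs\alpha<(1-\varepsilon)^\beta$; and since $0<\delta\leq 1$ and $0<1-\varepsilon<1$ (note $\varepsilon<1-\abs\alpha<1$ by \eqref{epsilon-alpha}), we have $(1-\varepsilon)^\beta\leq(1-\varepsilon)^{\beta\delta}$, so $\abs\alpha<(1-\varepsilon)^\beta\leq(1-\varepsilon)^{\beta\delta}$, exactly what is needed. The remaining hypotheses of \Cref{RES-LEMMA-T-LOC-UNI-EQC} — properness, geodesic, $\abs\alpha\leq 1$, continuity of $\varrho$, and the bounds \eqref{radii-bounds} — are all among the present assumptions, so the lemma applies and the theorem follows.

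\textbf{Main obstacle.} The argument is mostly bookkeeping; its one substantive point is that the lower bound in \eqref{radii-bounds} permits $\varrho$ to decay like $\dist(x,\partial\Omega)^\beta$ near the boundary, which forces the exponential blow-up $(1-\varepsilon)^{-j\beta\delta}$ of $\W_{\mu,K_j}(\diam\Omega)$; the iteration in \Cref{RES-LEMMA-T-LOC-UNI-EQC} converges only if this is still dominated by the geometric weight $\abs\alpha^j$, and reconciling these two rates is precisely the quantitative content of \eqref{alpha-beta-epsilon 3} together with $\delta\leq 1$.
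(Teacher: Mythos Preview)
Your proof is correct and follows exactly the same approach as the paper: choose $\W_{\mu,K}$ via \Cref{RES-AD-CONT-W}, use \eqref{varrho-Omega-m} to bound $\W_{\mu,K_j}(\diam\Omega)$, compute the $\limsup$ of the $j$-th roots, and conclude by \Cref{RES-LEMMA-T-LOC-UNI-EQC}. Your final step is in fact slightly more careful than the paper's, which invokes only \eqref{epsilon-alpha} to claim $\abs\alpha<1-\varepsilon\leq(1-\varepsilon)^{\delta\beta}$ (an inequality that fails when $\delta\beta>1$); you correctly route the argument through \eqref{alpha-beta-epsilon 3} and the observation $(1-\varepsilon)^\beta\leq(1-\varepsilon)^{\beta\delta}$ for $\delta\leq 1$.
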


\begin{proof}
We only need to check that the assumptions in \Cref{RES-LEMMA-T-LOC-UNI-EQC} are satisfied. By \Cref{RES-AD-CONT-W}, for any compact set $K\subset\Omega$, we can choose $\W_{\mu,K}$ as in \eqref{W-AD-CONT} for any compact set $K \subset \Omega$. 
Thus, after replacing $K$ by $K_j$ and $t$ by $\diam\Omega$ and recalling  that $\widehat\omega_\varrho(\diam\Omega)=\diam\Omega$, we get,
\begin{equation*}
				\pare{\W_{\mu,K_j}(\diam\Omega)}^{1/j}=\pare{C(\diam\Omega)^\delta}^{1/j}\varrho_{K_j}^{\,-\delta/j},
\end{equation*}
and by \eqref{varrho-Omega-m},
\begin{equation*}
				\pare{\W_{\mu,K_j}(\diam\Omega)}^{1/j}\leq\pare{\frac{C (\diam \Omega )^{\delta}}{\lambda^{\delta}}}^{1/j}(1-\varepsilon)^{\,-\delta\,\beta}.
\end{equation*}
Taking limits we get
\begin{equation*}
				\limsup_{j\rightarrow\infty}\pare{\W_{\mu,K_j}(\diam\Omega)}^{1/j}\leq (1-\varepsilon)^{\,-\delta\,\beta}.
\end{equation*}
On the other hand, by \eqref{epsilon-alpha}  we have $\abs \alpha < 1 - \varepsilon \leq (1-\varepsilon)^{\,\delta\,\beta}$ so condition \eqref{alpha-cond} follows and the sequence $\braces{\T^n_\alpha u}_n$ is locally uniformly equicontinuous in $\Omega$ by \Cref{RES-LEMMA-T-LOC-UNI-EQC}. 
\end{proof}


\section{Regularity of solutions}


In this section we give regularity results for functions $u\in C(\overline{\Omega})$  satisfying the $\alpha$-mean value property (that is, solutions of the functional equation $\T_{\alpha}u = u$) with respect to an admissible radius function in a bounded domain $\Omega \subset \X$. When $\alpha=0$, then $\T_0=\M$ and the regularity of such solutions was already  obtained in \Cref{M regularity}. 
However, the case $\alpha \neq 0$ is more delicate and stronger assumptions on the radius function $\varrho$ are needed, as we have already seen in \Cref{SEC:ITERATION}.
\\

We focus our attention on inequality \eqref{Tk-modcont}. Since the continuous function $u$ is assumed to be a fixed point of the operator $\T_\alpha$, after replacing $\T_\alpha^n u$ by $u$, we are allowed to pass to the limit when $n\rightarrow\infty$. From \eqref{Tk-modcont} we get
\begin{equation}\label{u-fixed-point-series}
				\omega_{u,K_m}(t)\leq(1-\alpha)\norm{u}_\infty\sum_{j=0}^{\infty}\abs{\alpha}^j\W_{\mu,K_{m+j}}\pare{\widehat\omega_\varrho^{(j)}(t)},
\end{equation}
for $t\in [0, \diam\Omega]$, where $m\in \N$ is fixed. Therefore, the series in \eqref{u-fixed-point-series} will provide the information about the regularity of the solution $u$.
The following is our main regularity result. 

\begin{theorem} \label{main}
Let $(\X,d,\mu)$ be a proper, geodesic metric measure space satisfying the
$\delta$-annular decay condition for some $\delta\in(0,1]$ and let $\Omega \subset \X$ be a bounded domain. Suppose that  $\varrho$ is a Lipschitz admissible radius function in $\Omega$ with Lipschitz constant $L\geq 1$ such that
\begin{equation*}
				\lambda\dist(x,\partial\Omega)^\beta\leq\varrho(x)\leq\varepsilon\dist(x,\partial\Omega),
\end{equation*}
for all $x\in \Omega$, where $0 < \lambda \leq \ell(\Omega )^{1-\beta}\varepsilon$ and $\ell (\Omega)$ is given by \eqref{eq-L-Omega}. Assume also that  
\begin{equation*}
\begin{split}
				\abs{\alpha} & < L^{-1},  \\
				0 <  \ \varepsilon ~& < 1-L\abs{\alpha},
\end{split}
\end{equation*}
and choose $\beta$ so that 
\begin{equation} \label{alpha-L}
1  \, \leq \, \beta<\frac{\log\displaystyle\frac{1}{L\abs{\alpha}}}{\, \log\displaystyle\frac{1}{1-\varepsilon}\, }. 
\end{equation}
%
Then any $u\in C(\overline{\Omega})$ verifying the $\alpha$-mean value property in $\Omega$ with respect to $\varrho$ (that is, $\T_{\alpha} u = u$) is locally $\delta$-H\"{o}lder continuous in $\Omega$. In particular, if $\delta = 1$ then $u$ is locally Lipschitz continuous in $\Omega$. 
\end{theorem}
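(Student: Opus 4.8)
The plan is to feed the Lipschitz hypothesis into the fixed-point series \eqref{u-fixed-point-series} and show that it is dominated by a convergent geometric series of power type $t^{\delta}$.

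First I would pin down the ingredients. Since $\varrho$ is $L$-Lipschitz, combining \Cref{RES-RC-LIP} with \Cref{RES-M-EST} shows that for every compact $K\subset\Omega$ the function $\W_{\mu,K}(t)=8LD_\delta\,\varrho_K^{-\delta}\,t^{\delta}$ satisfies \eqref{def-mod-WK-1}. Moreover one may take the base modulus $\omega_{\varrho,\Omega}(t)=\min\{Lt,\diam\Omega\}$, a concave modulus of continuity for $\varrho$ with $\omega_{\varrho,\Omega}(\diam\Omega)=\diam\Omega$ (here $L\ge1$ is used), so that by \eqref{def-hat-rho} one gets $\widehat\omega_\varrho(t)=\min\{Lt,\diam\Omega\}\le Lt$, and hence by monotonicity and induction $\widehat\omega_\varrho^{(j)}(t)\le L^{j}t$ for every $j\in\N$. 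Since $\abs\alpha<L^{-1}\le 1$, the estimate \eqref{Tk-modcont} applies with $K=K_m$; plugging in the fixed-point identity $\T_\alpha^n u=u$ and letting $n\to\infty$ (the first term there is $\le\abs\alpha^{\,n}\,\omega_{u,\Omega}(\diam\Omega)\to0$) yields \eqref{u-fixed-point-series}, the passage to the limit being legitimate precisely because the series converges, which is what the next step establishes.

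Next I would estimate the $j$-th term of the series in \eqref{u-fixed-point-series}. Using the chosen $\W_{\mu,K}$, the bound $\widehat\omega_\varrho^{(j)}(t)\le L^{j}t$, and the lower bound $\varrho_{K_{m+j}}\ge\lambda(1-\varepsilon)^{(m+j)\beta}$ from \eqref{varrho-Omega-m}, the $j$-th term is at most
\[
8LD_\delta\,\lambda^{-\delta}\,(1-\varepsilon)^{-m\beta\delta}\,t^{\delta}\,q^{\,j},\qquad q:=\abs\alpha\,L^{\delta}(1-\varepsilon)^{-\beta\delta}.
\]
The key point is that $q<1$: hypothesis \eqref{alpha-L} rewrites as $L\abs\alpha<(1-\varepsilon)^{\beta}$, hence $(L\abs\alpha)^{\delta}<(1-\varepsilon)^{\beta\delta}$ since $\delta\in(0,1]$; and because $0<\abs\alpha\le1$ and $\delta\le1$ force $\abs\alpha\le\abs\alpha^{\delta}$, we obtain $L^{\delta}\abs\alpha\le L^{\delta}\abs\alpha^{\delta}=(L\abs\alpha)^{\delta}<(1-\varepsilon)^{\beta\delta}$, i.e.\ $q<1$. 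Summing the geometric series then gives, for each fixed $m\in\N$,
\[
\omega_{u,K_m}(t)\le\frac{8LD_\delta(1-\alpha)\norm{u}_\infty}{\lambda^{\delta}(1-q)}\,(1-\varepsilon)^{-m\beta\delta}\,t^{\delta},
\]
so that $u$ is $\delta$-H\"older continuous on $K_m$ with a constant independent of $t$.

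Finally, since $(1-\varepsilon)^{m}\to0$ as $m\to\infty$, every compact subset of $\Omega$ is contained in some $K_m$ (recall \eqref{def-Omega}), and therefore $u$ is locally $\delta$-H\"older continuous in $\Omega$; when $\delta=1$ this is exactly local Lipschitz continuity. I expect the only genuinely delicate point to be the verification that the series converges at the correct rate — this is exactly what the admissible range \eqref{alpha-L} for $\beta$ is designed to guarantee, balancing the decay $\abs\alpha^{\,j}$ against the growth $L^{j\delta}$ coming from the iterated modulus $\widehat\omega_\varrho^{(j)}$ and the blow-up $\varrho_{K_{m+j}}^{-\delta}\sim(1-\varepsilon)^{-(m+j)\beta\delta}$ of the constants $\W_{\mu,K_{m+j}}$ as the sets exhaust $\Omega$; everything else is routine bookkeeping with estimates already established.
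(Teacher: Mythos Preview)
Your proof is correct and follows essentially the same route as the paper: both plug the Lipschitz modulus $\widehat\omega_\varrho^{(j)}(t)\le L^{j}t$ and the bound $\varrho_{K_{m+j}}\ge\lambda(1-\varepsilon)^{(m+j)\beta}$ into the fixed-point series \eqref{u-fixed-point-series}, reduce to a geometric series with ratio $q=\abs\alpha L^{\delta}(1-\varepsilon)^{-\beta\delta}$, and use \eqref{alpha-L} to conclude $q<1$. Your write-up is in fact a bit more explicit than the paper's (explicit constants, and the chain $\abs\alpha\le\abs\alpha^{\delta}$ to pass from $L\abs\alpha<(1-\varepsilon)^{\beta}$ to $q<1$), but the argument is the same.
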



\begin{proof}
By assumption, $\varrho$ is $L$-Lipschitz, therefore we have $\widehat\omega_\varrho(t)=\min\braces{L t,\diam\Omega}$ 
Iterating we get the inequality $ \displaystyle \widehat\omega_\varrho^{(j)}(t)\leq L^jt$ for each $t\in [0, \diam\Omega]$ and each $j\in\N$. Moreover, since $\mu$ satisfies the $\delta$-annular decay property \eqref{delta-AD}, from \eqref{W-AD-HOLDER} together with \eqref{varrho-Omega-m} we get
\begin{equation*}
				\W_{\mu,K_{m+j}}(t)\leq \frac{C\,t^\delta}{\lambda^\delta(1-\varepsilon)^{(m+j)\beta\,\delta}}
\end{equation*}
for some constant $C=C(D_\delta,D_\mu,L)\geq 1$. Replacing all this in \eqref{u-fixed-point-series} we obtain the following estimate:
\begin{equation*}
				\omega_{u,K_m}(t)\leq \frac{C(1-\alpha)\norm{u}_\infty}{\lambda^\delta(1-\varepsilon)^{m\,\beta\,\delta}}\pare{\sum_{j=0}^{\infty}\pare{\frac{L^\delta\abs{\alpha}}{(1-\varepsilon)^{\beta\,\delta}}}^j}t^\delta.
\end{equation*}
Now observe that \eqref{alpha-L} implies the convergence of the above series and, consequently, the desired H\"{o}lder regularity estimate. 
\end{proof}

In the particular case that $\beta = 1$ we obtain the following corollary. 
\begin{cor} \label{reg beta=1}
Let $(\X,d,\mu)$ be a proper, geodesic metric measure space satisfying the
$\delta$-annular decay condition for some $\delta\in(0,1]$ and let $\Omega \subset \X$ be a bounded domain. Suppose that  $\varrho$ is a Lipschitz admissible radius function in $\Omega$ with Lipschitz constant $L \geq 1$ such that
\begin{equation*}
				\lambda\dist(x,\partial\Omega)\leq\varrho(x)\leq\varepsilon\dist(x,\partial\Omega),
\end{equation*}
for all $x\in \Omega$, where $0 < \lambda \leq \varepsilon$. Assume also that  
\begin{equation*}
\begin{split}
				\abs{\alpha} & < L^{-1},  \\
				0 <  \ \varepsilon ~& < 1-L\abs{\alpha}.
\end{split}
\end{equation*}
Then any $u\in C(\overline{\Omega})$ verifying the $\alpha$-mean value property in $\Omega$ with respect to $\varrho$ (that is, $\T_{\alpha} u = u$) is locally $\delta$-H\"{o}lder continuous in $\Omega$. In particular, if $\delta = 1$ then $u$ is locally Lipschitz continuous in $\Omega$. 
\end{cor}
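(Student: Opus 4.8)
The plan is to derive Corollary~\ref{reg beta=1} directly from Theorem~\ref{main} by specializing $\beta=1$, checking only that the hypotheses of the theorem are met and that the condition \eqref{alpha-L} becomes vacuous in this case. First I would observe that, with $\beta=1$, the lower bound on $\varrho$ in the corollary reads $\lambda\dist(x,\partial\Omega)\leq\varrho(x)$, which is exactly the $\beta=1$ instance of the bound $\lambda\dist(x,\partial\Omega)^\beta\leq\varrho(x)$ in Theorem~\ref{main}; likewise the upper bound $\varrho(x)\leq\varepsilon\dist(x,\partial\Omega)$ is unchanged. The side condition on $\lambda$ in the theorem is $0<\lambda\leq\ell(\Omega)^{1-\beta}\varepsilon$, and setting $\beta=1$ gives $\ell(\Omega)^{0}\varepsilon=\varepsilon$, so the theorem's restriction collapses to precisely $0<\lambda\leq\varepsilon$, which is what the corollary assumes. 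The assumptions $\abs\alpha<L^{-1}$ and $0<\varepsilon<1-L\abs\alpha$ are carried over verbatim.

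Next I would handle the remaining hypothesis of Theorem~\ref{main}, namely the choice of $\beta$ subject to \eqref{alpha-L}, i.e. $1\leq\beta<\dfrac{\log(1/(L\abs\alpha))}{\log(1/(1-\varepsilon))}$. The point is that $\beta=1$ is an admissible choice exactly when $1<\dfrac{\log(1/(L\abs\alpha))}{\log(1/(1-\varepsilon))}$. Since $0<1-\varepsilon<1$ we have $\log(1/(1-\varepsilon))>0$, so this is equivalent to $\log\dfrac{1}{1-\varepsilon}<\log\dfrac{1}{L\abs\alpha}$, that is, $\dfrac{1}{1-\varepsilon}<\dfrac{1}{L\abs\alpha}$, i.e. $L\abs\alpha<1-\varepsilon$. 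But this last inequality is nothing other than the hypothesis $\varepsilon<1-L\abs\alpha$ assumed in the corollary (note also that $L\abs\alpha>0$ unless $\alpha=0$; when $\alpha=0$ the statement reduces to the already established Corollary~\ref{M regularity} with $\gamma=1$, so we may assume $\alpha\neq0$ and the strict inequalities are genuine). Hence with $\beta=1$ all hypotheses of Theorem~\ref{main} hold, and we conclude that any $u\in C(\overline\Omega)$ with $\T_\alpha u=u$ is locally $\delta$-H\"older continuous in $\Omega$, and locally Lipschitz when $\delta=1$.

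I do not expect any genuine obstacle here: the corollary is a pure specialization, and the only thing requiring a line of argument is the equivalence between ``$\beta=1$ satisfies \eqref{alpha-L}'' and the assumed inequality $\varepsilon<1-L\abs\alpha$, which is the elementary manipulation of logarithms above. If one prefers not to invoke Theorem~\ref{main} as a black box, one can equally well reprise its short proof directly: using that $\varrho$ is $L$-Lipschitz gives $\widehat\omega_\varrho^{(j)}(t)\leq L^j t$, the $\delta$-annular decay estimate \eqref{W-AD-HOLDER} together with $\varrho_{K_{m+j}}\geq\lambda(1-\varepsilon)^{(m+j)}$ (this is \eqref{varrho-Omega-m} with $\beta=1$) yields $\W_{\mu,K_{m+j}}(t)\leq C\lambda^{-\delta}(1-\varepsilon)^{-(m+j)\delta}t^\delta$, and plugging these into the fixed-point series \eqref{u-fixed-point-series} produces
\begin{equation*}
\omega_{u,K_m}(t)\leq\frac{C(1-\alpha)\norm{u}_\infty}{\lambda^\delta(1-\varepsilon)^{m\delta}}\pare{\sum_{j=0}^\infty\pare{\frac{L^\delta\abs\alpha}{(1-\varepsilon)^\delta}}^j}t^\delta,
\end{equation*}
whose geometric series converges precisely because $L\abs\alpha<1-\varepsilon$ forces $L^\delta\abs\alpha<(1-\varepsilon)^\delta$ (here $0<\delta\leq1$ and $0<1-\varepsilon<1$). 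Either route gives the claimed local $\delta$-H\"older bound on each $K_m$, hence on every compact subset of $\Omega$, completing the proof.
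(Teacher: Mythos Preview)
Your proposal is correct and matches the paper's approach exactly: the paper presents Corollary~\ref{reg beta=1} immediately after Theorem~\ref{main} with the single remark ``In the particular case that $\beta = 1$ we obtain the following corollary,'' giving no further proof. Your verification that $\beta=1$ is admissible in \eqref{alpha-L} precisely when $\varepsilon<1-L\abs{\alpha}$, together with the observation that the constraint $0<\lambda\leq\ell(\Omega)^{1-\beta}\varepsilon$ collapses to $0<\lambda\leq\varepsilon$, is exactly the content the paper leaves implicit.
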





\begin{thebibliography}{99}

\bibitem{ADA-GAC-GOR}
\newblock \textsc{T. Adamowicz, M. Gaczkowski, P. G\'orka},
\newblock \emph{Harmonic functions on metric measure spaces},
\newblock Preprint: \href{https://arxiv.org/abs/1601.03919}{\underline{\texttt{arXiv:1601.03919}}}.

\bibitem{ARR-LLO-16-1}
\newblock \textsc{\'A. Arroyo, J. G. Llorente},
\newblock \emph{On the Dirichlet Problem for solutions of a restricted nonlinear mean value property},
\newblock Differential and Integral Equations \textbf{29} (2016), no. 1-2, 151--166.

\bibitem{ARR-LLO-16-2}
\newblock \textsc{\'A. Arroyo, J. G. Llorente},
\newblock \emph{On the asymptotic mean value property for planar $p$-harmonic functions},
\newblock Proc. Amer. Math. Soc. \textbf{144} (2016), no. 9, 3859--3868.

\bibitem{B-B-L}
\newblock \textsc{A. Bj\"{o}rn, J. Bj\"{o}rn, J. Lehrb\"{a}ck}, 
\newblock \emph{The annular decay property and capacity estimates for thin annuli}, 
\newblock Preprint: \href{https://arxiv.org/abs/1512.06577}{\underline{\texttt{arXiv:1512.06577}}}.

\bibitem{BUC}
\newblock \textsc{S. Buckley},
\newblock \emph{Is the maximal function of a Lipschitz function continuous?},
\newblock Ann. Acad. Sci. Fenn. Math. \textbf{24} (1999), no. 2, 519--528.

\bibitem{C-M}
\newblock \textsc{T.H. Colding, W.P. Minicozzi II}, 
\newblock \emph{Liouville theorems for harmonic sections and applications}, 
\newblock Comm. Pure Appl. Math.  \textbf{51} (1998), 113-138. 

\bibitem{DAN-GAR-NHI}
\newblock \textsc{D. Danielli, N. Garofalo, D.M. Nhieu},
\newblock \emph{Trace inequalities for Carnot-Carath\'eodory spaces and applications},
\newblock Ann. Scuola Norm. Sup. Pisa Cl. Sci. (4) \textbf{27} (1998), no. 2, 195--252.
 
 \bibitem{DIF-GUT-LAN}
\newblock \textsc{G. Di Fazio, C. Guit\'errez, E. Lanconelli},
\newblock \emph{Covering theorems, inequalities on metric spaces and applications to PDE's},
\newblock Math. Ann. \textbf{341} (2008), no. 2, 255--291.
 
\bibitem{GAC-GOR}
\newblock \textsc{M. Gaczkowski, P. G\'orka},
\newblock \emph{Harmonic functions on metric measure spaces: convergence and compactness},
\newblock Potential Anal., \textbf{31} (2009), no. 3, pp. 203--214.

\bibitem{K}
\newblock \textsc{O.D. Kellogg},
\newblock \emph{Converses of Gauss' theorem on the arithmetic mean},
\newblock Trans. Amer. Math. Soc. \textbf{36} (1934), 227-242.

\bibitem{LEG-ARC}
\newblock \textsc{E. Le Gruyer, J.C. Archer},
\newblock \emph{Harmonious extensions},
\newblock Siam J. Math. Anal. \textbf{29} (1998), 279--292.

\bibitem{LEW}
\newblock \textsc{J.L. Lewis},
\newblock \emph{Regularity of the derivatives of solutions to certain degenerate elliptic equations},
\newblock Indiana Univ. Math. J. \textbf{32} (1983), no. 6, 849--858.

\bibitem{LIN-MAN}
\newblock \textsc{P. Lindqvist, J.J. Manfredi},
\newblock \emph{On the mean value property for the $p$-Laplace equation in the plane},
\newblock Proc. Amer. Math. Soc. \textbf{144} (2016), no. 1, 143--149.

\bibitem{LLO}
\newblock \textsc{J. G. Llorente},
\newblock Mean value properties and unique continuation,
\newblock \emph{Commun. Pure Appl. Anal.}, \textbf{14(1)}(2015), 185-199.

\bibitem{LUI-PAR-SAK-1}
\newblock \textsc{H. Luiro, M. Parviainen, E. Saksman},
\newblock \emph{On the existence and uniqueness of $p$-harmonious functions},
\newblock Differential and Integral Equations, \textbf{27, no. 3-4}, (2014), 201-216.

\bibitem{LUI-PAR-SAK-2}
\newblock \textsc{H. Luiro, M. Parviainen, E. Saksman}, 
\newblock \emph{Harnack's inequality for $p$-harmonic functions via stochastic games},
\newblock Comm. Partial Differential Equations, \textbf{38} (2013), N. 11, 1985-2003. 

\bibitem{N-V}
\newblock \textsc{I. Netuka, J. Vesel\'{y}}, 
\newblock \emph{Mean value property and harmonic functions},
\newblock Classical and modern Potential Theory and applications. NATO Adv. Sci. Inst. Ser. C. Math. Phys. Sci., \textbf{430} (1994), 359-398. 

\bibitem{MAN-PAR-ROS-10}
\newblock \textsc{M. Parviainen, J.J. Manfredi, J.D. Rossi},
\newblock \emph{An asymptotic mean value characterization for $p$-harmonic functions},
\newblock Proc. Amer. Math. Soc. \textbf{138} (2010), no. 3, 881--889.

\bibitem{MAN-PAR-ROS-12}
\newblock \textsc{M. Parviainen, J.J. Manfredi, J.D. Rossi},
\newblock \emph{On the definition and properties of $p$-harmonious functions},
\newblock Ann. Sc. Norm. Super. Pisa Cl. Sci., \textbf{(5) 11, no. 2}, (2012), 215-241.

\bibitem{PER-SCH-SHE-WIL}
\newblock \textsc{Y. Peres, O. Schramm, S. Sheffield, D.B. Wilson},
\newblock \emph{Tug-of-war and the infinity Laplacian},
\newblock J. Amer. Math. Soc. \textbf{22} (2009), no. 1, 167--210.

\bibitem{PER-SHE}
\newblock \textsc{Y. Peres, S. Sheffield},
\newblock \emph{Tug-of-war with noise: a game-theoretic view of the $p$-Laplacian},
\newblock Duke Math. J. \textbf{145} (2008), no. 1, 91--120.

\bibitem{URA}
\newblock \textsc{N.N. Ural'tseva},
\newblock \emph{Degenerate quasilinear elliptic systems} (Russian),
\newblock Zap. Nau\v cn. Sem. Leningrad. Otdel. Mat. inst. Steklov. (LOMI) \textbf{7} (1968), 184--222.

\bibitem{V}
\newblock \textsc{V. Volterra},
\newblock \emph{Alcune osservazioni sopra propieta atte ad individuare una funzione},
\newblock Rend. Acadd. d. Lincei Roma, \textbf{18} (1909), 263-266.

\end{thebibliography}
\end{document}